\documentclass[11pt]{article}

\usepackage[margin=1.12in]{geometry}
\usepackage[T1]{fontenc}
\usepackage{lmodern}
\usepackage{microtype}
\usepackage{amsmath,amssymb,amsthm,mathtools}
\usepackage{enumitem}
\usepackage{xcolor}
\usepackage[colorlinks=true,linkcolor=blue!55!black,citecolor=blue!55!black,urlcolor=blue!55!black]{hyperref}
\usepackage[capitalise,noabbrev]{cleveref}

\numberwithin{equation}{section}
\newtheorem{theorem}{Theorem}[section]
\newtheorem{proposition}[theorem]{Proposition}
\newtheorem{lemma}[theorem]{Lemma}

\theoremstyle{definition}
\newtheorem{remark}[theorem]{Remark}

\newcommand{\R}{\mathbb R}
\newcommand{\bk}{\backslash}
\newcommand{\C}{\mathbb C}
\newcommand{\bD}{\mathbb D}

\newcommand{\rank}{\operatorname{rank}}
\newcommand{\supp}{\operatorname{supp}}
\newcommand{\vol}{\operatorname{vol}}

\newcommand{\ee}{\mathrm e}
\newcommand{\ii}{\mathrm i}
\newcommand{\dd}{\,\mathrm d}
\newcommand{\norm}[1]{\lVert #1\rVert}
\newcommand{\abs}[1]{\lvert #1\rvert}
\newcommand{\ip}[2]{\langle #1,#2\rangle}
\newcommand{\inj}{\operatorname{inj}}
\newcommand{\ad}{\operatorname{Ad}}

\title{Improved sup-norm bounds for locally symmetric spaces}
\author{Christopher Lutsko}
\date{}

\begin{document}
\raggedbottom
\maketitle

\begin{abstract}
Let $X=G/K$ be a symmetric space of noncompact type, of dimension $n$ and
rank $r$, and let $Y=\Gamma\backslash X$.  Sarnak's local bound
for an $L^2$-normalized spherical joint eigenfunction with regular tempered
parameter of size $T$ is $\norm{\phi}_\infty\ll T^{(n-r)/2}$.  We prove
$o(T^{(n-r)/2})$ locally uniformly on every quotient.  On finite-volume real
hyperbolic manifolds this is uniform in the expanding cusp range
$y\leq T^\beta$, $\beta<1/2$.  If the injectivity radius is bounded below,
we prove the global estimate $T^{(n-r)/2}(\log T)^{-r/2}$. The proof makes use of a novel kernel argument and a uniform bound on the spherical function.

\end{abstract}

\section{Introduction}

Let $G$ be a connected, semisimple Lie group with finite center and maximal
compact subgroup $K<G$. Let $X:=G/K$ be a symmetric space of noncompact type
and let $Y:=\Gamma\bk X$, where $\Gamma<G$ is a torsion-free, discrete
subgroup. Let $W$ denote the restricted Weyl group. Write
\[
 n:=\dim X,\qquad r:=\rank X,\qquad q:=n-r.
 \]
Let $\bD(X)$ denote the algebra of invariant differential operators on $X$.
We say that $\phi\in L^2(Y)$ is a joint eigenfunction if it is a simultaneous
eigenfunction of every operator in $\bD(X)$.  Via the Harish--Chandra
isomorphism, the joint spectrum is parameterized by a $W$-invariant subset
$\sigma\subset\mathfrak a^*_{\C}\cong\C^r$.  To a joint eigenfunction we
associate a spectral parameter $\lambda\in\sigma$, defined up to $W$, and put
$T=1+\norm\lambda$.  A sequence of directions
$\lambda/\norm\lambda$ is called uniformly \emph{regular} if it remains in a
fixed compact subset of an open Weyl chamber.

In a letter to Caroline Morawetz \cite{Sarnak2004}, Peter Sarnak used a
pretrace formula to show that
 \begin{align}
   \|\phi\|_\infty \ll T^{q/2},
 \end{align}
 on compact sets. We show that, along regular rays, this scale is never attained. 

\begin{theorem}[Strict Sarnak bound]\label{thm:main}
Let $\phi_j\in L^2(Y)$ be an $L^2$-normalized spherical joint eigenfunction
with tempered parameter $\lambda_j\in\mathfrak a^*$, and put
$T_j=1+\norm{\lambda_j}$.  Assume that $T_j\to\infty$ and that the directions
$\lambda_j/\norm{\lambda_j}$ are uniformly regular.  Then, on every compact
subset $U\subset Y$,
\begin{equation}\label{eq:local-main}
 \norm{\phi_j}_{L^\infty(U)}=o_{U}(T_j^{q/2}).
\end{equation}
If, in addition, $Y$ is a finite-volume real hyperbolic manifold, fix standard
cusp coordinates and let $Y_{\beta}$ be the compact core together
with the parts of all cusps of height $y\leq T_j^\beta$.  Then, for every
$0<\beta<1/2$,
\begin{equation}\label{eq:truncated-cusp-main}
 \norm{\phi_j}_{L^\infty(Y_{\beta})}
 =o_\beta(T_j^{q/2}).
\end{equation}
\end{theorem}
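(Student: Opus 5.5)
The approach is a pretrace-formula argument in which the point-pair invariant is localized at a shrinking scale $\epsilon$ instead of the fixed injectivity scale used by Sarnak. The $o(\cdot)$ then comes from letting $\epsilon\to0$ slowly, after $T\to\infty$.

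\textbf{Step 1: the test function.} Fix $\epsilon>0$ smaller than half the injectivity radius on a neighborhood of $U$. Choose a $W$-invariant Paley--Wiener function $h$ on $\mathfrak a^*_{\C}$ that is nonnegative on the tempered and complementary spectrum, satisfies $h(\lambda_j)\geq 1$, and is concentrated in a ball of radius $\asymp\epsilon^{-1}$ around $W\lambda_j$. Its spherical transform $k$ should be a $K$-biinvariant kernel supported in $\{d(x,o)\leq\epsilon\}$. A natural choice is $h=\abs{\hat g}^2$, where $g$ is a bump of width $\epsilon$ shifted to $\lambda_j$, symmetrized over $W$; this gives $k=g*g^*$ with support radius $2\epsilon$, and positivity holds on the whole unitary spectrum.

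\textbf{Step 2: the pretrace inequality.} Dropping all spectral terms except $\phi_j$, and using that only the identity element of $\Gamma$ contributes when the support of $k$ lies inside the injectivity ball, we get
\[
 \abs{\phi_j(x)}^2\leq k(o)=\int_{\mathfrak a^*}h(\nu)\,\abs{c(\nu)}^{-2}\dd\nu .
\]
Along uniformly regular directions the Plancherel density satisfies $\abs{c(\nu)}^{-2}\asymp T^{q}$ on the ball of radius $\epsilon^{-1}\ll T$ around $\lambda_j$, because every positive root pairs with $\nu$ at size $\asymp T$. Hence
\[
 \abs{\phi_j(x)}^2\ll_U T^{q}\,\epsilon^{-r}.
\]
This only reproduces Sarnak's bound. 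To gain, we must use the normalization more cleverly.

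\textbf{Step 3: the gain.} We keep $h\geq0$ but ask only that $h(\lambda_j)\asymp 1$, while $\int h\,\abs{c}^{-2}$ is as small as possible given support radius $R$. The uncertainty principle forces the spectral window to have width at least $R^{-1}$, which gives the bound $T^qR^{-r}$ at fixed radius $R$. The gain must therefore come from taking $R$ large, that is, from using geometric terms $\gamma\neq e$ with $d(x,\gamma x)\leq R$. We take $R=R(T)\to\infty$ slowly. The $\gamma\neq e$ terms are then controlled by the pointwise decay of $k$ away from the origin. Using the uniform bound on the spherical function announced in the abstract, namely $\abs{\varphi_\lambda(\exp H)}\ll(1+T\norm H)^{-q/2}$-type decay for regular $\lambda$ and $H$ away from the walls, together with Harish-Chandra's $\Xi$-decay for the singular directions, each such term is $o(T^{q}R^{-r})\cdot(\text{count})$. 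For fixed $x\in U$, the number of $\gamma$ with $d(x,\gamma x)\leq R$ is $\ll_U e^{CR}$. Choosing $R$ so that $e^{CR}\leq T^{\delta}$ with the decay beating $T^{\delta}$ gives $\abs{\phi_j(x)}^2\ll T^qR^{-r}+o(T^q)=o(T^q)$, uniformly on $U$. Under a global injectivity lower bound the same argument with $R\asymp\log T$ would yield the stated $T^{q/2}(\log T)^{-r/2}$. The main obstacle is this step. The off-diagonal kernel $k(x^{-1}\gamma x)$ has to be bounded by something decaying in $T$ uniformly in $d(x,\gamma x)\leq R$, including near the Weyl chamber walls where spherical function asymptotics degenerate. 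I would first try the uniform bound on $\varphi_\lambda$ combined with integrating $h$ against it over a window of width $R^{-1}$, which averages out oscillation.

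\textbf{Step 4: cusps.} For finite-volume hyperbolic $Y$, $q=n-1$ and $r=1$. At height $y$ the injectivity radius is $\asymp 1/y$, and the parabolic elements in the cusp stabilizer at distance $\leq R$ number roughly $(ye^{R})^{n-1}$. Their contribution is a lattice sum of $k$ over translates at Euclidean displacement $\geq 1/y$. Inserting the decay $(Td)^{-(n-1)/2}$ from the spherical function bound, this sum is $\ll T^{q}\,(y^{2}/T)^{(n-1)/2}\cdot T^{o(1)}$. This is $o(T^q)$ exactly when $y\leq T^{\beta}$ with $\beta<1/2$, which gives \eqref{eq:truncated-cusp-main}.
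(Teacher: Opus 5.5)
Your Step~3 kernel is the paper's: a $W$-symmetrized bump of width $R^{-1}$ at $\lambda_j$, $k=g*g^*$, support $\asymp R$, and identity term $\ll T^qR^{-r}$. Your parabolic lattice sum in Step~4 also reproduces the paper's bound $T^{q/2}y^q$, and hence the threshold $\beta<1/2$.

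\textbf{The gap in Step~3.} There you tie $R$ to $T$ and then leave the off-diagonal estimate open, and the tools you name do not close it.
\begin{itemize}
\item Harish-Chandra's $\Xi$ gives no decay in $T$. A conjugate $g_x^{-1}\gamma g_x$ whose Cartan component $H$ lies on or near a wall therefore gets no saving over the identity term. Up to $\ee^{CR}$ conjugates have to be summed.
\item Averaging $\varphi_\lambda(g)$ over a window of width $R^{-1}$ gives no cancellation at distance $d\ll R$, because the phase moves by only $O(d/R)$ across the window.
\end{itemize}
What actually handles the walls is the root-product form of Marshall's bound. Since $\norm H\geq\iota$ and the roots span $\mathfrak a^*$, some root has $\abs{\alpha(H)}\gg\iota$. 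Keeping only that factor gives $T^{-m_0/2}$ whether or not $H$ is singular. With $R\to\infty$, however, you need this with constants controlled on balls of growing radius. That is \cref{prop:marshall-tracked}, the hardest input in the paper. Your choice $\ee^{CR}\leq T^\delta$ is really the proof of \cref{thm:log} run on $U$, and proves more than is asked.

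\textbf{The missing idea.} \cref{thm:main} needs no uniformity in $R$ at all; the paper decouples the two limits. Fix $\epsilon$, so the support radius $R_\epsilon=2R_0/\epsilon$ does not depend on $T$. For $x\in U$, \cref{lem:orbit} puts every nonidentity argument in one compact $E\Subset G\setminus K$, with uniformly many terms. The qualitative decay $\sup_E\abs{\varphi_{Tc+u}}\to0$ of \cref{lem:spherical-decay} then makes their sum $o_\epsilon(T^q)$; this uses Marshall's bound on a single fixed ball together with the one-root argument above. This gives $\limsup_j T_j^{-q}\norm{\phi_j}_{L^\infty(U)}^2\leq C\epsilon^r$, and then one lets $\epsilon\downarrow0$. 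Equivalently, your ``$R(T)\to\infty$ slowly'' works if $R(T)$ is produced by a diagonal argument, with no rate.

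\textbf{Step~4.} The same decoupling closes Step~4, where you never treat the non-parabolic elements of $\Gamma$. With $R_\epsilon$ fixed, \eqref{eq:nonparabolic-distance-prelim} excludes them once $2\log(y/y_0)>R_\epsilon$. The bounded-height remainder is then a fixed compact set covered by the local part. If instead $R$ grows with $T$, as your factor $T^{o(1)}$ indicates, that remainder grows with $T$, and Step~3 does not cover it.
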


In fact, we can improve the upper bound by a logarithmic factor as long as the injectivity radius is strictly positive.

\begin{theorem}[Logarithmic improvement]\label{thm:log}
Under the spectral hypotheses of \cref{thm:main}, suppose in addition that
\begin{equation}\label{eq:thickness}
 \inf_{x\in Y}\operatorname{inj}_Y(x)>0.
\end{equation}
Then
\begin{equation}\label{eq:log-main}
 \norm{\phi_j}_{L^\infty(Y)}
 \ll \frac{T_j^{q/2}}{(\log T_j)^{r/2}}.
\end{equation}
The implied constant is uniform for spectral directions in a fixed compact
regular set.  In particular, \eqref{eq:log-main} holds on every compact
quotient and on every convex cocompact real hyperbolic quotient.
\end{theorem}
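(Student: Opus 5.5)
We follow the standard pretrace strategy, with the propagation time taken to grow logarithmically in $T$; the positive injectivity radius is what makes this possible. Fix a radial test function $k_0\in C_c^\infty(K\backslash G/K)$ supported in the ball of radius $1$, with spherical transform $h_0=\widehat{k_0}$ satisfying $h_0\geq 0$ on $\mathfrak a^*\cup i\mathfrak a^*$ (in particular at all unitary parameters) and $h_0(0)>0$; for instance $k_0=\kappa*\kappa^*$. For a parameter $R\geq 1$ we form the radial kernel whose spherical transform is
\[
h_R(\nu)=\sum_{w\in W}h_0\bigl(R(\nu-w\lambda_j)\bigr),
\]
suitably regularised so that $h_R\geq0$ on the unitary spectrum. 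Its kernel $k_R$ is supported in a ball of radius $\asymp R$. Choosing $h_R$ positive on the whole unitary spectrum, including complementary series, is routine via the Paley--Wiener theorem. The pretrace formula then gives, for every $x\in Y$,
\[
h_R(\lambda_j)\abs{\phi_j(x)}^2\le \sum_{\gamma\in\Gamma}k_R(x,\gamma x).
\]
Since $h_R(\lambda_j)\gg1$, it suffices to bound the right-hand side by $T^{q}R^{-r}$ and to take $R=c\log T$.

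The key inputs are bounds on $k_R$. By the inversion formula,
\[
k_R(a)=\int_{\mathfrak a^*}h_R(\nu)\varphi_\nu(a)\abs{c(\nu)}^{-2}\dd\nu .
\]
Since $h_R$ is concentrated in a ball of radius $R^{-1}$ about $W\lambda_j$, and $\abs{c(\nu)}^{-2}\asymp T^{q}$ there by uniform regularity, the identity term is $k_R(e)\ll T^qR^{-r}$. For $a$ away from the identity we use the uniform bound on the spherical function announced in the abstract, of the form
\[
\abs{\varphi_\nu(a)}\ll (1+\norm\nu\,\abs{\log a})^{-q/2}\,e^{-\rho(\log a)}
\]
for regular $\nu$, uniformly in $a$. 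With this bound and the volume of the support,
\[
\abs{k_R(a)}\ll T^{q/2}R^{-r}\,e^{-\rho(\log a)}\abs{\log a}^{-q/2}\qquad\text{for }\abs{\log a}\geq \epsilon .
\]
The near-identity region $\abs{\log a}<\epsilon$ produces no extra terms, since $\Gamma$ acts freely with injectivity radius at least $\epsilon$.

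It remains to count lattice points. Uniform injectivity radius gives
\[
\#\{\gamma: d(x,\gamma x)\le t\}\ll \vol B(t)\asymp e^{2\norm\rho t}
\]
uniformly in $x$. Summing the off-diagonal bound over $\gamma$ with $d(x,\gamma x)\le R$ gives a total of order
\[
T^{q/2}R^{-r}e^{C R}.
\]
Here the decay factor $e^{-\rho}$ only partially cancels the volume growth $e^{2\rho}$, and the remaining exponential is the source of the constant $C$. Choosing $R=c\log T$ with $cC<q/2$ makes this term $\ll T^{q}R^{-r}$. Hence
\[
\abs{\phi_j(x)}^2\ll T^q(\log T)^{-r},
\]
which is \eqref{eq:log-main}. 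The uniformity in the direction comes from the uniformity of the $c$-function and spherical-function bounds on compact regular sets.

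The main obstacle is the spherical function bound. It must be uniform in $a$ across all walls and hold simultaneously in the range $\norm\nu\abs{\log a}\gg1$ with the full $-q/2$ decay, whereas stationary phase degenerates near the walls of the positive chamber. We take the abstract's uniform spherical function bound as established earlier in the paper. If only a weaker decay were available near the walls, we would split the $\gamma$-sum by the distance of $\log(x^{-1}\gamma x)$ to the walls and use that the measure of the region within distance $\delta$ of a wall carries a factor $\delta$. We would also check that the lattice count contributes only $e^{O(R)}$, which is harmless because $R$ is logarithmic.
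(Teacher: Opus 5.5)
Your architecture is the paper's: a positive kernel whose transform is a bump of width $R^{-1}$ about $W\lambda_j$, support radius $\asymp R$, identity term $\ll T^qR^{-r}$, uniform orbit packing $\ll\ee^{h_XR}$ from the injectivity radius, and $R=\kappa\log T$ with $\kappa$ small.

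The gap is in the step that carries everything, the off-diagonal bound. You attribute to the paper the estimate $\abs{\varphi_\nu(\exp H)}\ll(1+\norm\nu\norm H)^{-q/2}\ee^{-\rho(H)}$, uniformly in $H$. The paper does not prove it, and it is false.
\begin{itemize}
\item In rank $\geq2$, take $H\neq0$ on a wall. The critical set of the phase is the positive-dimensional set $M'K_H$. So $\varphi_{Tc}(\exp H)$ is of size $T^{-\frac12\sum_{\alpha>0,\,\alpha(H)\neq0}m_\alpha}$ along a sequence of $T$, not $T^{-q/2}$. This is why Marshall's bound is a product over roots, with factor $1$ for every root vanishing on $H$.
\item Even in rank one, the Harish-Chandra expansion gives $\abs{\varphi_\nu(a_t)}\asymp\nu^{-q/2}\ee^{-\rho t}$ along $t\to\infty$, without the $t^{-q/2}$. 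That loss would be harmless here.
\end{itemize}
Your fallback does not rescue this. You propose to split the $\gamma$-sum by distance to the walls and use that wall neighbourhoods have small measure. But the orbit is discrete, so nothing controls how many points $g_x^{-1}\gamma g_x$ sit near or on a wall.

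The paper instead uses \cref{prop:marshall-tracked}: for $\norm H\leq R$,
\[
\abs{\varphi_{tc}(\exp H)}\leq D\ee^{AR}\prod_{\alpha>0}(1+t\abs{\alpha(H)})^{-m_\alpha/2}.
\]
The constant grows exponentially in the radius. Proving that dependence (\cref{s:marshall-uniformity}) is the main technical work, since Marshall's fixed-compact-set version does not suffice once $R\asymp\log T$.

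From this bound, keep a single factor. Since the roots span $\mathfrak a^*$, some $\alpha$ has $\abs{\alpha(H)}\geq c_X\iota$. This gives $\abs{\varphi_{Tc+u/R}(\exp H)}\ll\ee^{A\norm H}T^{-m_0/2}$, uniformly and with walls included, for $\iota\leq\norm H\leq BR$ and $\norm u\leq T^{1/4}$. Larger $u$ are handled by $\abs{\varphi}\leq1$ and Schwartz decay.

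The result is $\abs{k_R(g)}\ll T^{q-\delta}R^{-r}\ee^{A\,d(gK,K)}$, as in \cref{lem:log-peak}. The exponential loss to beat is therefore $\ee^{(A+h_X)BR}$, not only the orbit count, and choosing $\kappa B(A+h_X)<\delta/2$ closes the proof.

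A smaller point concerns positivity. The translates $h_0(R(\nu-w\lambda_j))$ need not be nonnegative at non-tempered parameters. Take instead $k_R=\beta^**\beta$, with $\widehat\beta$ the $W$-symmetrized translated bump. Then Cauchy--Schwarz gives $\abs{\widehat\beta(\lambda_j)}^2\abs{\phi_j(x)}^2\leq\sum_\gamma k_R(g_x^{-1}\gamma g_x)$ without any spectral expansion. This matters because $Y$ may have infinite volume.
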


In proving both our main theorems, a crucial ingredient is Proposition \ref{prop:marshall-tracked} which is a uniform bound on the spherical function. This is an  extension of \cite[Theorem~1.3]{Marshall2016} which is not uniform. In practice, making Marshall's theorem effective involves more than simple book-keeping. Hence, Proposition \ref{prop:marshall-tracked} and its proof may be interesting beyond this application.

\subsection{The sup-norm problem}

For a general compact $n$-manifold and an $L^2$-normalized eigenfunction
$-\Delta u=\lambda^2u$, the local Weyl law gives
\begin{equation}\label{eq:universal-sup}
 \norm{u}_\infty\ll\lambda^{(n-1)/2}.
\end{equation}
The round sphere shows that this can be sharp: zonal harmonics concentrate at
a point.  More generally, Sogge--Zelditch
\cite{SoggeZelditch2002,SoggeZelditch2016} gave precise geometric criteria
for maximal growth.

Negative curvature gives one model for improvement.  B\'erard propagated the
wave kernel to logarithmic time on compact manifolds without conjugate points,
which yields
$\norm{u}_\infty\ll\lambda^{(n-1)/2}(\log\lambda)^{-1/2}$
\cite{Berard1977,HassellTacy2015}; Keeler proved a corresponding two-point
Weyl law \cite{Keeler2024}.  On a
rank-$r$ symmetric space, resolving the full joint spectrum first replaces
the exponent $(n-1)/2$ by Sarnak's $(n-r)/2$. Theorem \ref{thm:log} recovers B\'erard's bound in rank $1$ and saves one factor of $(\log T)^{-1/2}$ for each spectral coordinate.

There is a different, arithmetic route to small sup norms.  Amplification by
Hecke operators gives power savings for special arithmetic eigenbases.  On an
arithmetic hyperbolic surface, Iwaniec--Sarnak proved
\begin{equation}\label{eq:IS-bound}
 \norm{u}_\infty\ll_\varepsilon T^{5/12+\varepsilon},
\end{equation}
against the local exponent $T^{1/2}$ \cite{IwaniecSarnak1995}.  Higher-rank
amplification gives further power savings in particular groups and spectral
or level regimes.  Such estimates use arithmetic correspondences and a Hecke
eigenfunction.  Theorems~\ref{thm:main} and \ref{thm:log} use neither: they apply
to every spherical joint eigenfunction on every quotient in the stated
geometric range.  The two mechanisms are complementary.

The opposite problem of constructing large eigenfunctions is equally
important.  Arithmetic periods and theta lifts produce sequences with
polynomially growing sup norms on compact locally symmetric spaces
\cite{BrumleyMarshall2020,BrumleyMarshall2023}.

\begin{remark}
The multiplier construction is useful beyond point evaluation.  In
particular, in work in progress by the author, the same approach is used to
prove a fixed-window quantum ergodicity theorem in the eigenvalue aspect on a
fixed compact quotient.  This complements fixed-window results in the
Benjamini--Schramm, or level, aspect due to Le Masson--Sahlsten
\cite{LeMassonSahlsten2017} and, in higher rank,
Brumley--Marshall--Matz--Peterson
\cite{BrumleyMarshallMatzPeterson2026}.  The microlocal framework for regular
joint eigenfunctions on compact locally symmetric spaces was developed by
Silberman--Venkatesh \cite{SilbermanVenkatesh2007} and
Hansen--Hilgert--Schr\"oder \cite{HansenHilgertSchroeder2012}.
\end{remark}

\subsection{Proof plan and the Green-kernel viewpoint}

The starting point for this methodology is the observation that in rank $1$,
the Green function plays an important role in estimating spectral mass.  This
can be seen directly in \cite[Chapter~5]{Iwaniec2002} and indirectly in the
point-counting argument of Lax--Phillips \cite{LaxPhillips1982}, later
developed by Kontorovich \cite{Kontorovich2009}, by Kontorovich and the author
\cite{KontorovichLutsko2024}, and in the author's spectral treatment of
horospherical equidistribution \cite{Lutsko2025}.  However, a literal
resolvent is poorly adapted to this problem: at the target eigenvalue it has a
pole and its kernel has global support.  In higher rank, there is also no
canonical one-variable resolvent that resolves all $r$ invariant operators
simultaneously.

The central idea is to replace that resolvent by a positive, smoothed kernel.
Write $\lambda=Tc$, where $c$ is the regular spectral direction.  We choose a
$W$-invariant Paley--Wiener bump $\psi = \psi_{T,c,\epsilon}$ concentrated near the
Weyl orbit of $Tc$ and normalized by $\psi(Tc)=1$.  Taking its
spherical inverse transform $\beta$, we set
\begin{equation}\label{eq:green-surrogate}
 \Phi=\beta^**\beta,
 \qquad \widehat\Phi=\abs{\psi}^2,
 \qquad \widehat\Phi(Tc)=1.
\end{equation}
Thus $\Phi$ has unit response on the eigenfunction under study,
as a normalized
Green kernel would, but it is positive and has finite propagation.  Narrowing
the joint window reduces its value at the identity by the volume of that
window.  The apparent cost is that the spatial support grows.  Regular
spherical oscillation is precisely what makes the nonidentity deck transforms
small enough to pay for that growth.  This trade---spectral narrowing against
spatial propagation---is the proof.

The paper implements the idea in four steps.  Section~\ref{s:3} fixes the
notation and records the harmonic-analytic and geometric preliminaries,
including the qualitative and quantitative spherical decay estimates and the
cusp geometry.  The dependence on the Cartan variable in Marshall's
spherical estimate is proved separately in
Section~\ref{s:marshall-uniformity}; this uniformity is needed when the
propagation radius grows with $T$.  Section~\ref{s:Kernel} then constructs the positive peak by
spherical Paley--Wiener theory, derives the four kernel estimates, and proves
the positive pretrace inequality.  Section~\ref{s:4} first fixes the
bandwidth and then lets it tend to zero, proving the compact-set little-$o$
estimate.  It next sums the rank-one parabolic translates to reach the
expanding cusp range and finally chooses logarithmic propagation to prove
\cref{thm:log}.  The last subsection isolates the Airy transition that
prevents this thick-part argument from becoming an unrestricted global cusp
estimate.

\section{Notation and preliminaries}\label{s:3}

We collect the notation and background used in the construction before
introducing the automorphic kernel.  The identity element of $G$ is denoted by
$e$, and $d_X$ is the invariant distance on $X=G/K$.  For a function $f$ on
$G$, write
\[
 f^*(g)=\overline{f(g^{-1})},\qquad
 (f_1*f_2)(g)=\int_G f_1(gh^{-1})f_2(h)\dd h.
\]
We write $E\Subset Z$ when $E$ is a compact subset of $Z$, and $\supp f$ for
the support of $f$.  The notation $\widehat f$ always denotes the spherical
transform of a $K$-bi-invariant function, while $\norm{\cdot}$ denotes the
metric norm on $\mathfrak a$ or its dual norm on $\mathfrak a^*$.  Implied
constants may depend on the symmetric space and on fixed compact regular sets;
any additional dependence is displayed by a subscript.

\subsection{Structure, measures, and spectral parameters}

Fix a Cartan involution of $G$ and write
$\mathfrak g=\mathfrak k\oplus\mathfrak p$.  Choose a maximal abelian
subspace $\mathfrak a\subset\mathfrak p$, put $A=\exp\mathfrak a$, and fix
an Iwasawa decomposition $G=NAK$.  The Iwasawa projection
$\mathcal A:G\rightarrow\mathfrak a$ is determined by
$g\in N\exp(\mathcal A(g))K$.  The Cartan decomposition is
$G=K\exp(\overline{\mathfrak a^+})K$.  If
$g=k_1\exp H(g)k_2$ with $H(g)\in\overline{\mathfrak a^+}$, then
\begin{equation}\label{eq:cartan-distance}
 d_X(gK,K)=\norm{H(g)}.
\end{equation}
All norms on $\mathfrak a$ and $\mathfrak a^*$ below are the dual norms
induced by the $G$-invariant metric on $X$.

Let $\Sigma^+\subset\mathfrak a^*$ be the positive restricted roots,
$m_\alpha=\dim\mathfrak g_\alpha$, and $W=N_K(A)/Z_K(A)$.  Since $X$ has
no Euclidean factor, the restricted roots span $\mathfrak a^*$.  Hence
\begin{equation}\label{eq:q-root-count}
 \rho=\frac12\sum_{\alpha\in\Sigma^+}m_\alpha\alpha,
 \qquad r=\dim\mathfrak a,\qquad
 q=\sum_{\alpha\in\Sigma^+}m_\alpha=\dim X-r.
\end{equation}
We write $\mathfrak a^*_{\rm reg}$ for the complement of the root
hyperplanes.  A compact set $C\Subset\mathfrak a^*_{\rm reg}$ is always
understood to lie in one open chamber.  Thus there is a number
$\eta_C>0$ such that
\begin{equation}\label{eq:regular-margin}
 \abs{\alpha(c)}\geq\eta_C
 \quad(c\in C,\ \alpha\in\Sigma).
\end{equation}

Haar measure on $G$ is normalized compatibly with the Riemannian measure on
$X=G/K$; the normalization affects only the fixed constants denoted by
$c_X$.  An $L^2$ spherical joint eigenfunction on $Y$ is a smooth right
$K$-invariant function $\phi\in L^2(\Gamma\backslash G)$ which is a joint
eigenvector for $\mathcal D(X)$.  By the Harish--Chandra isomorphism its
eigencharacter is indexed by a $W$-orbit in $\mathfrak a^*_{\mathbb C}$.
For a tempered eigenfunction we choose its representative
$\lambda\in\overline{\mathfrak a^{*+}}$.  Our hypotheses say that
$\lambda=Tc$ with $c\in C$ after replacing $T$ by $\norm\lambda$; the use
of $1+\norm\lambda$ in the statements only accommodates bounded
parameters.

\subsection{Spherical inversion, Paley--Wiener theory, and Plancherel growth}

For $\lambda\in\mathfrak a^*$ the normalized elementary spherical function
is
\begin{equation}\label{eq:elementary-spherical}
 \varphi_\lambda(g)
 =\int_K\exp\!\bigl((\ii\lambda-\rho)
       (\mathcal A(g^{-1}k))\bigr)\dd k,
 \qquad \varphi_\lambda(e)=1.
\end{equation}
Changing $g$ to $g^{-1}$ or changing the sign convention for
$\mathcal A$ gives the equivalent formulas found in the literature.  What
we use is that $\varphi_\lambda$ is $K$-bi-invariant,
$\varphi_{w\lambda}=\varphi_\lambda$, and, for real $\lambda$,
\begin{equation}\label{eq:spherical-unit-bound}
 \abs{\varphi_\lambda(g)}\leq\varphi_0(g)\leq1.
\end{equation}
The first inequality follows directly by taking absolute values in
\eqref{eq:elementary-spherical}.

For $f\in C_c^\infty(K\backslash G/K)$ define
\[
 \widehat f(\lambda)=\int_Gf(g)\varphi_{-\lambda}(g)\dd g.
\]
Spherical inversion and Plancherel's formula are
\begin{align}
 f(g)&=c_X\int_{\mathfrak a^*}\widehat f(\lambda)
       \varphi_\lambda(g)\abs{\mathbf c(\lambda)}^{-2}\dd\lambda,
       \label{eq:spherical-inversion}\\
 \norm{f}_2^2&=c_X\int_{\mathfrak a^*}\abs{\widehat f(\lambda)}^2
       \abs{\mathbf c(\lambda)}^{-2}\dd\lambda.
       \label{eq:spherical-plancherel}
\end{align}
We integrate over all of $\mathfrak a^*$ and absorb $\abs W^{-1}$ into
$c_X$.

We use the following precise form of the spherical Paley--Wiener theorem.
If $F$ is a $W$-invariant entire function on
$\mathfrak a^*_{\mathbb C}$ and, for every $N$, satisfies
\begin{equation}\label{eq:PW-seminorm}
 \abs{F(\lambda+\ii\mu)}
 \leq C_N(1+\norm\lambda+\norm\mu)^{-N}
       \ee^{R\norm\mu},
\end{equation}
then $F=\widehat f$ for a unique
$f\in C_c^\infty(K\backslash G/K)$ supported where
$d_X(gK,K)\leq R$.  Conversely, the transform of every such $f$ satisfies
\eqref{eq:PW-seminorm}.  This is Gangolli's Paley--Wiener theorem
\cite{Gangolli1971}; the equality between exponential type and metric
support uses \eqref{eq:cartan-distance}.

Two consequences of the Gindikin--Karpelevich product formula will be used
repeatedly.  First, globally on the real axis,
\begin{equation}\label{eq:c-global-growth}
 \abs{\mathbf c(\lambda)}^{-2}\leq C_X(1+\norm\lambda)^q.
\end{equation}
More explicitly, if $\Sigma_0^+$ denotes the indivisible positive roots and
$\alpha_0=\alpha/\ip\alpha\alpha$, the product formula and Stirling's
estimate give one-variable factors $p_\alpha$ satisfying
\begin{equation}\label{eq:c-product-size}
 \abs{\mathbf c(\lambda)}^{-2}
 =C_X\prod_{\alpha\in\Sigma_0^+}
 p_\alpha(\ip\lambda{\alpha_0}),
 \qquad
 p_\alpha(s)\ll(1+\abs s)^{m_\alpha+m_{2\alpha}},
\end{equation}
with the matching asymptotic when $\abs s\to\infty$; by convention
$m_{2\alpha}=0$ if $2\alpha$ is not a root.  Summing the exponents in
\eqref{eq:c-product-size} gives
$\sum_{\alpha\in\Sigma_0^+}(m_\alpha+m_{2\alpha})=q$.
Second, for $C\Subset\mathfrak a^*_{\rm reg}$ and
$\norm u\leq T/2$,
\begin{equation}\label{eq:c-regular-growth}
 \abs{\mathbf c(Tc+u)}^{-2}
 \leq C_C T^q(1+\norm u)^q,
 \qquad c\in C.
\end{equation}
For bounded $u$ one has the sharper asymptotic
\begin{equation}\label{eq:c-asymptotic}
 \abs{\mathbf c(Tc+u)}^{-2}
 =T^q\bigl(\beta(c)+O_C(T^{-1}(1+\norm u)^{q+1})\bigr),
\end{equation}
with $\beta$ continuous and strictly positive on $C$.  To see the exponent,
apply Stirling's formula to each gamma quotient in the product formula.
Every positive root contributes its multiplicity to the total degree, so
the degree is the number $q$ in \eqref{eq:q-root-count}.  The same argument
without asymptotic expansion gives \eqref{eq:c-global-growth} and
\eqref{eq:c-regular-growth}.

\subsection{Uniform decay of regular spherical functions}

The analytic input is Marshall's root-product estimate.  We state exactly
the two consequences needed below, including the dependence on a growing
group-variable compactum.

\begin{proposition}[Root-product estimate with tracked constants]
\label{prop:marshall-tracked}
Let $C_1\Subset\mathfrak a^*_{\rm reg}$.  There are constants $A$ and $D$
depending only on $X$ and $C_1$ such that, for $R\geq1$, $t\geq1$,
$c\in C_1$, and $H\in\mathfrak a$ with $\norm H\leq R$,
\begin{equation}\label{eq:marshall-tracked}
 \abs{\varphi_{tc}(\exp H)}
 \leq D\ee^{AR}
 \prod_{\alpha\in\Sigma^+}
       (1+t\abs{\alpha(H)})^{-m_\alpha/2}.
\end{equation}
\end{proposition}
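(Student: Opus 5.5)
We follow Marshall's proof of \cite[Theorem~1.3]{Marshall2016} and make every constant explicit in $H$. First, reduce to $H$ in a fixed bounded set. The function $(t,c,H)\mapsto\varphi_{tc}(\exp H)$ satisfies $\abs{\varphi_{tc}}\le1$ by \eqref{eq:spherical-unit-bound}. Each root factor obeys $(1+t\abs{\alpha(H)})^{-m_\alpha/2}\ge(1+tR\norm\alpha)^{-m_\alpha/2}$. So the estimate is trivial unless $t$ is much larger than $\ee^{-AR}$ times a power. The content is therefore the uniform decay for large $t$, with a loss that is at most exponential in $R$.

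\textbf{Step 1: integral representation.} Write \eqref{eq:elementary-spherical} as an oscillatory integral over $K/M$ (equivalently over $N$ via the Bruhat decomposition). The phase is $\Psi_{c,H}(k)=c(\mathcal A(\exp(-H)k))$, with large parameter $t$, and the amplitude is $\ee^{-\rho(\mathcal A(\exp(-H)k))}$. The amplitude and all derivatives of the phase in $k$ are bounded by $\ee^{O(R)}$ when $\norm H\le R$. This holds because $\mathcal A(\exp(-H)k)$ is real-analytic in $(H,k)$, and its derivatives are controlled by matrix coefficients of $\ad(\exp H)$, which grow at most like $\ee^{O(\norm H)}$. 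Recording this Lipschitz/Cinfinity bound is the source of the factor $\ee^{AR}$.

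\textbf{Step 2: critical set and Hessian.} By Duistermaat--Kolk--Varadarajan, for regular $c$ the critical set of $\Psi_{c,H}$ is $W$-type data $\{k\in K: k\in N_K(\mathfrak a)Z_K(H)\}$. Near a critical point, the Hessian in the root-space directions $\mathfrak k_\alpha$ has eigenvalues comparable to $\alpha(c)\,\sinh\alpha(H)$ (or $\alpha(H)$-type factors), with multiplicity $m_\alpha$. The comparability constants depend on $\eta_{C_1}$ from \eqref{eq:regular-margin} and on $\ee^{O(R)}$. We then localize with a partition of unity around each critical component; the bump scales are chosen depending on $\abs{\alpha(H)}$, as in Marshall.

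\textbf{Step 3: uniform stationary phase.} On each piece, apply a uniform van der Corput/stationary phase bound in product form. Each root direction with Hessian size $\gtrsim t\abs{\alpha(H)}$ gives a gain of $(1+t\abs{\alpha(H)})^{-1/2}$ per dimension, hence $(1+t\abs{\alpha(H)})^{-m_\alpha/2}$ in total. Directions with small $t\abs{\alpha(H)}$ are bounded trivially. Away from the critical set, integration by parts gives rapid decay. Here the lower bound on the gradient is $\gg \ee^{-O(R)}\min_\alpha\abs{\alpha(H)}$-type quantities, and the derivative bounds cost $\ee^{O(R)}$ per step. After taking a fixed number of integrations, every loss is absorbed into $D\ee^{AR}$.

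\textbf{Main obstacle.} The main difficulty is Step 3 near singular $H$, where several $\alpha(H)$ are small but of different sizes. There the critical set degenerates, and a naive Morse lemma has constants blowing up like inverse powers of $\abs{\alpha(H)}$. I would follow Marshall's inductive scheme: pass to the Levi subgroup $M_\theta$ for the set $\theta$ of roots with $\abs{\alpha(H)}\le\delta$. Factor $\varphi$ through the spherical function of $M_\theta$ and treat the large roots by nondegenerate stationary phase. The small roots are then handled by induction on rank, giving a rescaled problem. One must check at each stage that rescaling $H\mapsto H/\delta$ keeps $\norm H\le R$ up to a fixed factor, so that only finitely many (depending on $X$) constants $\ee^{O(R)}$ are multiplied. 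This yields the constants $A,D$ depending only on $X$ and $C_1$.
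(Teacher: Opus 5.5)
Your outline (Harish--Chandra integral, $\ee^{O(R)}$ derivative bounds from matrix coefficients, product-form stationary phase near $M'K_H$, integration by parts elsewhere, Marshall's scheme near the walls) is the paper's outline, and Step~1 is essentially \cref{lem:phase}. The gap is in the step you treat as routine: the nonstationary region.

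You assert that the only $R$-dependence enters through derivative bounds. You then propose the gradient lower bound $\norm{\nabla_k\Theta}\gg\ee^{-O(R)}\min_\alpha\abs{\alpha(H)}$. This bound is not justified, and it would not suffice even if true. For $\norm H\leq R$ the quantity $\min_\alpha\abs{\alpha(H)}$ can be arbitrarily small. Each integration by parts gains only $t^{-1}$ times negative powers of the gradient lower bound. So you are left with factors $(\min_\alpha\abs{\alpha(H)})^{-O(1)}$. These cannot be absorbed into $D\ee^{AR}$, and they destroy the root-product shape as $H$ approaches a wall, where the right side only asks for decay in the roots that do not vanish.

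What is needed is a bound $\norm{\nabla_k\Theta}\geq\ee^{-CR}$, uniform in $H\in B_R$, valid off an $\ee^{-CR}$-tube around the \emph{total} critical incidence $\{(k,H,c):k\in M'K_H\}$ in the joint $(k,H,c)$-space. Points that are almost critical because $H$ is near a wall point $H'$ lie near $M'K_{H'}$. They must be placed inside normal-form charts built at that wall. There the linear term $Q(H)L(y)$ of \eqref{eq:nonWeyl-normal}, together with \eqref{eq:root-comparison}, pays for the missing roots. For $H$ in a fixed compactum, Marshall gets this separation from compactness, which gives no rate in $R$. The paper makes it effective by noting that the $K$-gradient is a Nash map and applying the \L{}ojasiewicz-type inequality of \cref{lem:nash-separation} with height $\ee^{CR}$.

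Your Levi-induction sketch does not close this gap. First, $\varphi_\lambda$ does not factor through the spherical function of $M_\theta$ at bounded $H$; that happens only asymptotically, along faces going to infinity. Second, $H\mapsto H/\delta$ is not a symmetry of the curved phase $c(\mathcal A(k\exp H))$. So there is no rescaled problem of the same type to induct on.

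What actually has to be quantified is Marshall's resolved-chart construction:
\begin{enumerate}[label=\textup{(\alph*)}]
\item flag charts with radial coordinate $z_0\leq C(1+R)$;
\item division of phase derivatives by root monomials with no $\abs{\beta(H)}^{-1}$ loss;
\item the bound $C(1+R)$ for the inverse of the pivot $\ee^x\sinh x/x$ in \eqref{eq:Marshall-pivot};
\item an $R$-independent lower bound for the linear coefficient, which depends only on angular variables;
\item quantitative inverse-function bounds giving charts of radius $\ee^{-CR}$;
\item an $\ee^{CR}$ bound on the number of charts.
\end{enumerate}
Your Step~2 claim that the Hessian is comparable to $\alpha(c)\sinh\alpha(H)$ ``with constants $\ee^{O(R)}$'' presupposes exactly these Morse coordinates, built uniformly across the walls.

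A smaller point: your opening ``reduction to bounded $H$'' is never carried out. The trivial bound $\abs{\varphi}\leq1$ only disposes of $t\lesssim\ee^{cR}$ (not $\ee^{-AR}$), and it leaves all large $H$ in play.
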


\begin{proof}
The dependence on $R$ is proved in \cref{s:marshall-uniformity}.
\end{proof}

\begin{lemma}[Decay away from the identity]\label{lem:spherical-decay}
Let $C\Subset\mathfrak a^*_{\mathrm{reg}}$, let
$E\Subset G\setminus K$, and let $U\Subset\mathfrak a^*$.  Then
\begin{equation}\label{eq:spherical-decay}
 \sup_{c\in C,\,u\in U,\,g\in E}
 \abs{\varphi_{Tc+u}(g)}\longrightarrow0
 \qquad(T\to\infty).
\end{equation}
\end{lemma}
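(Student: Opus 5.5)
The plan is to deduce \eqref{eq:spherical-decay} directly from \cref{prop:marshall-tracked}. Since $E$ is compact, $R$ stays bounded, so the exponential loss $\ee^{AR}$ is harmless. The one geometric point is that $H(g)\neq0$ for $g\notin K$, and then the root-product must produce a negative power of $T$.

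\textbf{Step 1: absorb the shift $u$ into the direction.} Write $Tc+u=T\,c'$ with $c'=c+u/T$. Let $C_1$ be the closed $\delta$-neighbourhood of $C$, where $\delta>0$ is small enough that $C_1\Subset\mathfrak a^*_{\rm reg}$ still lies in the same open chamber; this is possible by \eqref{eq:regular-margin}. Since $U$ is compact, $\norm{u}\leq M_U$, and so $c'\in C_1$ as soon as $T\geq T_0:=\max(1,M_U/\delta)$. We may therefore apply \cref{prop:marshall-tracked} with $t=T$ and direction $c'\in C_1$. The constants $A,D$ then depend only on $X$, $C$ and $U$.

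\textbf{Step 2: reduce to the Cartan variable.} By $K$-bi-invariance, $\varphi_{Tc'}(g)=\varphi_{Tc'}(\exp H(g))$, where $H(g)\in\overline{\mathfrak a^+}$ is the Cartan projection. The map $g\mapsto H(g)$ is continuous. Hence $H(E)$ is a compact subset of $\overline{\mathfrak a^+}$, and by \eqref{eq:cartan-distance} we have $\norm{H(g)}=d_X(gK,K)>0$ for $g\notin K$. Thus $H(E)\subset\{\delta_E\leq\norm H\leq R_E\}$ for some $0<\delta_E\leq R_E<\infty$. Set $R=\max(1,R_E)$.

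\textbf{Step 3: a uniform nonvanishing root.} This is the only point needing care: $H(g)$ may lie on a wall, so some factors $(1+T\abs{\alpha(H)})$ may equal $1$. Because $X$ has no Euclidean factor, the restricted roots span $\mathfrak a^*$. Hence for $H\neq0$ some $\alpha\in\Sigma^+$ satisfies $\alpha(H)\neq0$. The function $H\mapsto\max_{\alpha\in\Sigma^+}\abs{\alpha(H)}$ is continuous and strictly positive on the compact set $H(E)$, so it is bounded below there by some $\kappa>0$.

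\textbf{Step 4: conclude.} Bound every factor in \eqref{eq:marshall-tracked} by $1$ except the one for the maximizing root $\alpha$, and use $m_\alpha\geq1$. This gives, for $T\geq T_0$,
\[
 \sup_{c\in C,\,u\in U,\,g\in E}\abs{\varphi_{Tc+u}(g)}\leq D\ee^{AR}(1+T\kappa)^{-1/2},
\]
and the right-hand side tends to $0$ as $T\to\infty$. The argument even gives the quantitative rate $O(T^{-1/2})$; the lemma only records the qualitative statement.
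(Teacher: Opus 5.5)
Your proposal is correct and follows essentially the same route as the paper: write $Tc+u=T(c+u/T)$ with $c+u/T$ in a slightly larger compact regular set, apply \cref{prop:marshall-tracked} on a fixed ball containing the Cartan image of $E$, and keep only one root factor that is bounded below there. The only cosmetic differences are that you get the lower bound on $\max_{\alpha}\abs{\alpha(H)}$ by compactness rather than by norm equivalence, and you use $m_\alpha\geq1$ in place of $m_0=\min_\alpha m_\alpha$.
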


\begin{proof}
By Cartan decomposition, $E$ maps to a compact set $B_E\subset\mathfrak a$
which does not contain $0$.  Hence there is $\iota_E>0$ with
$\norm H\geq\iota_E$ for $H\in B_E$.  Since the restricted roots span
$\mathfrak a^*$, norm equivalence gives a constant $c_X>0$ such that
\begin{equation}\label{eq:root-detects-H}
 \max_{\alpha\in\Sigma^+}\abs{\alpha(H)}
 \geq c_X\norm H.
\end{equation}
For $u\in U$ write $Tc+u=T(c+u/T)$.  Once $T$ is large, the directions
$c+u/T$ lie in a fixed compact regular set $C_1$.  Apply
\cref{prop:marshall-tracked} with a fixed ball containing $B_E$.  Choose a
root $\alpha_H$ satisfying \eqref{eq:root-detects-H} and discard every
factor in the product except its factor.  If
$m_0=\min_{\alpha\in\Sigma^+}m_\alpha$, then
\[
 \abs{\varphi_{Tc+u}(\exp H)}
 \leq C_E(1+Tc_X\iota_E/2)^{-m_0/2}
 \leq C_E'T^{-m_0/2}.
\]
This is uniform in $c,u,H$ and tends to zero, proving
\eqref{eq:spherical-decay}.
\end{proof}

\begin{lemma}[Quantitative regular decay]\label{lem:quantitative-decay}
Fix $C\Subset\mathfrak a^*_{\mathrm{reg}}$ and $\iota>0$.  There are
$\delta>0$ and constants $A,M$ such that, whenever
\[
 \iota\leq\norm H\leq A_0\log T,
 \qquad c\in C,
 \qquad \norm u\leq T^{1/2},
\]
one has
\begin{equation}\label{eq:quantitative-decay}
 \abs{\varphi_{Tc+u}(\exp H)}
 \leq C_{C,\iota,A_0}(1+\norm u)^M\ee^{A\norm H}T^{-\delta}.
\end{equation}
\end{lemma}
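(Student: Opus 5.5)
The plan is to reduce \eqref{eq:quantitative-decay} to \cref{prop:marshall-tracked}, applied with a radius $R$ that grows like $\log T$, in the same way that \cref{lem:spherical-decay} used it for a fixed compactum.

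First, write $Tc+u=t c'$ with $t=\norm{Tc+u}$ and $c'=(Tc+u)/t$. Since $\norm u\le T^{1/2}$, we have $t=T(1+O(T^{-1/2}))$ and $\norm{c'-c/\norm c}\ll T^{-1/2}$. So for $T\ge T_0(C)$ the directions $c'$ lie in a fixed compact regular set $C_1\Subset\mathfrak a^*_{\rm reg}$, for instance a closed $\eta_C/2$-neighbourhood of the normalized directions of $C$. The finitely many $T<T_0$ are handled by \eqref{eq:spherical-unit-bound}, $\abs{\varphi}\le 1$, after enlarging the constant. Next apply \cref{prop:marshall-tracked} with $R=\max(1,\norm H)$. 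This gives
\[
 \abs{\varphi_{Tc+u}(\exp H)}\le D\ee^{A}\ee^{A\norm H}\prod_{\alpha\in\Sigma^+}(1+t\abs{\alpha(H)})^{-m_\alpha/2}.
\]
The essential point is that the prefactor $\ee^{AR}$ is exactly of the form $\ee^{A\norm H}$ allowed in the statement. This is why the tracked constants are needed: a non-uniform version would give an uncontrolled dependence on the compactum $\{\norm H\le A_0\log T\}$.

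Second, as in the proof of \cref{lem:spherical-decay}, I discard all factors of the product except the one for a root $\alpha_H$ satisfying \eqref{eq:root-detects-H}. This gives $t\abs{\alpha_H(H)}\ge t c_X\iota\ge c_X\iota T/2$, and hence
\[
 \abs{\varphi_{Tc+u}(\exp H)}\le C_{C,\iota}\,\ee^{A\norm H}\,T^{-m_0/2}.
\]
So one can take $\delta=m_0/2$ (or any smaller positive number) and $M=0$; the factor $(1+\norm u)^M$ with $M\ge0$ only weakens the bound. The upper bound $\norm H\le A_0\log T$ is not needed for this inequality itself. It only ensures the result is meaningful later, when $\ee^{A\norm H}\le T^{AA_0}$ has to be balanced against $T^{-\delta}$. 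The constant's dependence on $A_0$ is therefore trivial.

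The main obstacle is the uniformity hidden in \cref{prop:marshall-tracked}: $D$ and $A$ must be independent of $t$, of $c'\in C_1$, and of $R$ except through $\ee^{AR}$. That is deferred to \cref{s:marshall-uniformity}, and here I only need to check its hypotheses: $t\ge1$, $R\ge1$, $\norm H\le R$, and $c'\in C_1$. The other point to verify is that the perturbation $u$ keeps $c'$ inside one fixed regular compact set uniformly. This uses $\norm u/T\le T^{-1/2}\to0$ together with the margin \eqref{eq:regular-margin}. If instead the perturbation were allowed to be of size comparable to $T$, one would need the factor $(1+\norm u)^M$ to absorb a loss of regularity, and that is presumably why the statement allows it.
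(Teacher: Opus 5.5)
Your argument is correct and is essentially the paper's proof: you keep the direction in a fixed compact regular set $C_1$, apply \cref{prop:marshall-tracked} with $R=\max(1,\norm H)$, keep only the factor of a root with $\abs{\alpha(H)}\geq c_X\iota$, and read off $\delta=m_0/2$, $M=0$. The paper simply takes $t=T$ with direction $c+u/T$ rather than normalizing. Two slips are cosmetic only: $t=T(1+O(T^{-1/2}))$ and $t\geq T/2$ presuppose $\norm c=1$, though $t\asymp_C T$ suffices in general; and the radius of the neighbourhood defining $C_1$ should be chosen small relative to the regularity margin of the normalized directions, rather than literally $\eta_C/2$.
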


\begin{proof}
Write $Tc+u=T(c+u/T)$.  For $\norm u\leq T^{1/2}$, the vector
$c+u/T$ belongs, for all sufficiently large $T$, to a fixed compact regular
set $C_1$ containing $C$ in its interior.  Apply
\cref{prop:marshall-tracked} with
$R_H=\max(1,\norm H)$; the hypothesis on $H$ ensures
$R_H\leq1+A_0\log T$.  By
\eqref{eq:root-detects-H}, some positive root satisfies
$\abs{\alpha(H)}\geq c_X\iota$.  Keeping only that factor gives
\[
 \abs{\varphi_{Tc+u}(\exp H)}
 \leq D'\ee^{A\norm H}
 (1+Tc_X\iota)^{-m_0/2},
 \qquad m_0=\min_{\alpha>0}m_\alpha.
\]
Thus \eqref{eq:quantitative-decay} holds with
$\delta=m_0/2$ and $M=0$.  Enlarging the constant covers the bounded range
of $T$ for which $c+u/T$ has not yet entered $C_1$.
\end{proof}

\subsection{Orbit geometry}

We shall use the following elementary geometric observation.

\begin{lemma}[Uniform local orbit bounds]\label{lem:orbit}
Let $K_0\Subset Y$ and $R<\infty$.  There are $\iota>0$ and $N<\infty$ such
that, for every $x\in K_0$ and every lift $g_xK$,
\begin{align}
 d_X(g_xK,\gamma g_xK)&\geq\iota\quad(\gamma\ne e),
 \label{eq:orbit-separation}\\
 \#\{\gamma:d_X(g_xK,\gamma g_xK)\leq R\}&\leq N.
 \label{eq:orbit-count}
\end{align}
If $\inf_Y\operatorname{inj}_Y>0$, the constants are uniform for
$K_0=Y$; moreover one may take
\begin{equation}\label{eq:orbit-exponential}
 N\leq C_\iota\ee^{h_XR}
\end{equation}
for a constant $h_X$ depending only on $X$.
\end{lemma}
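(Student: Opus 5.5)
The plan is to reduce everything to the properness and discreteness of the $\Gamma$-action on $X$, using a compact set of lifts. Since $K_0\Subset Y$, I will choose a compact set $F\Subset G$ whose image in $Y$ contains $K_0$; every $x\in K_0$ then has a lift $g_x\in F$. Any other lift has the form $\gamma_0 g_x k$ with $\gamma_0\in\Gamma$ and $k\in K$. Replacing the lift conjugates the orbit, $\gamma\mapsto\gamma_0\gamma\gamma_0^{-1}$, and this preserves both the distances $d_X(g_xK,\gamma g_xK)$ and the condition $\gamma\neq e$. So it suffices to prove both estimates for $g\in F$.

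For \eqref{eq:orbit-count} I will use the triangle inequality. If $d_X(gK,\gamma gK)\le R$ with $g\in F$, then
\[
 d_X(K,\gamma K)\le R+2\sup_{g\in F}d_X(gK,K)=:R'.
\]
So $\gamma$ lies in $\{\gamma\in\Gamma: d_X(\gamma K,K)\le R'\}$, and this set is finite because $\Gamma$ is discrete and closed balls in $X$ are compact. Its cardinality is the bound $N$.

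For \eqref{eq:orbit-separation}, torsion-freeness gives that $\Gamma$ acts freely on $X$, so $d_X(gK,\gamma gK)>0$ whenever $\gamma\ne e$. For $g\in F$, only the finitely many $\gamma$ found above (with $R=1$, say) can come within distance $1$. The function $g\mapsto\min_{\gamma\neq e,\ \gamma\in\text{that set}}d_X(gK,\gamma gK)$ is a minimum of finitely many continuous positive functions on the compact set $F$, so it is bounded below by some $\iota>0$. Taking $\iota\le1$ gives the separation. Equivalently, one can take $\iota=2\min_{K_0}\inj_Y$, since the injectivity radius is continuous and positive on $K_0$.

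For the thick case I will take $\iota=2\inf_Y\inj_Y>0$, so that \eqref{eq:orbit-separation} holds for every $x\in Y$ directly: a nontrivial $\gamma$ moving $g_xK$ by less than $2\inj_Y(x)$ would produce a short geodesic loop at $x$. Counting uses disjoint balls. The balls $B(\gamma g_xK,\iota/2)$ are pairwise disjoint, since $d(\gamma g_xK,\gamma' g_xK)=d(g_xK,\gamma^{-1}\gamma' g_xK)\ge\iota$. Those with $d(g_xK,\gamma g_xK)\le R$ lie inside $B(g_xK,R+\iota/2)$. Homogeneity of $X$ makes ball volumes independent of the center, so
\[
 N\le \frac{\vol B(R+\iota/2)}{\vol B(\iota/2)}.
\]
The standard volume growth $\vol B(\rho)\ll_X \ee^{h_X\rho}$, valid for example with $h_X=2\norm\rho$ or any bound from the Ricci curvature lower bound via Bishop–Gromov, gives $N\le C_\iota \ee^{h_XR}$. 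This is \eqref{eq:orbit-exponential}, and the same count gives uniform $N$ for each fixed $R$.

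The only step needing care is the link between injectivity radius and displacement. The cleanest route is that $X$ is a Hadamard manifold, so $\inj_Y(x)$ equals half the minimal displacement $\min_{\gamma\ne e}d_X(g_xK,\gamma g_xK)$. I will cite this fact rather than reprove it. Everything else is routine.
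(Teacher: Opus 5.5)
Your proof is correct. The thick case is the paper's argument: separation $\iota=2\inf_Y\inj_Y$ via the Hadamard displacement formula, disjoint balls about orbit points, and a ratio-of-volumes bound. You use balls of radius $\iota/2$ where the paper uses $\iota/3$; both radii work.

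You diverge from the paper in the local statement. The paper gets \eqref{eq:orbit-separation} from continuity of $\inj_Y$ on $K_0$. It then derives \eqref{eq:orbit-count} from the same packing bound, so even on $K_0$ it has the explicit estimate $N\le\vol B_X(R+\iota/3)/\vol B_X(\iota/3)$. You instead fix a compact set $F$ of lifts and check lift-independence by conjugation. You then get the count from discreteness of $\Gamma$, namely finiteness of $\{\gamma:d_X(K,\gamma K)\le R'\}$. Separation comes from a minimum of finitely many positive continuous displacement functions on $F$.

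Your route is more elementary, since the compact case needs neither continuity of the injectivity radius nor its identification with half the minimal displacement. The price is a non-explicit $N$. That is all \cref{prop:thick} actually uses, since it only needs a uniform bound on the number of terms.

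One correction concerns the volume growth. The bound $\vol B_X(s)\ll\ee^{2\norm\rho s}$ is false in rank $r\ge2$, because $\vol B_X(s)\asymp s^{(r-1)/2}\ee^{2\norm\rho s}$ for $s\ge1$. You need $h_X>2\norm\rho$, as the paper states, or the Bishop--Gromov rate you also offer. Since $h_X$ only enters the choice \eqref{eq:kappa-choice}, this slip is harmless.
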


\begin{proof}
For a torsion-free quotient,
\[
 2\inj_Y(x)=\inf_{\gamma\ne e}d_X(g_xK,\gamma g_xK).
\]
The injectivity-radius function is continuous.  It therefore has a positive
minimum on $K_0$, proving \eqref{eq:orbit-separation} with
$\iota=2\min_{K_0}\inj_Y$.

Put $p=g_xK$.  Distinct orbit points $\gamma p$ and $\gamma'p$ are at
distance
\[
 d_X(\gamma p,\gamma'p)
 =d_X(p,\gamma^{-1}\gamma'p)\geq\iota.
\]
Thus the balls $B_X(\gamma p,\iota/3)$ are disjoint.  If
$d_X(p,\gamma p)\leq R$, each such ball lies in
$B_X(p,R+\iota/3)$.  Homogeneity of $X$ now gives the explicit packing
bound
\begin{equation}\label{eq:packing-ratio}
 \#\{\gamma:d_X(p,\gamma p)\leq R\}
 \leq\frac{\vol B_X(p,R+\iota/3)}
          {\vol B_X(p,\iota/3)}.
\end{equation}
This proves \eqref{eq:orbit-count}.  The polar-coordinate formula on a
noncompact symmetric space shows
$\vol B_X(p,s)\leq C_X\ee^{h_Xs}$ for $s\geq1$ (one may take any
$h_X>2\norm\rho$).  Inserting this in \eqref{eq:packing-ratio} proves
\eqref{eq:orbit-exponential}.  If $\inf_Y\inj_Y>0$, the same $\iota$ works
for every $x\in Y$, so every step is global.
\end{proof}

\subsection{Real-hyperbolic cusp coordinates}

For the global part of \cref{thm:main}, let $X=\mathbb H^{q+1}$ in the
upper-half-space model.  A finite-volume quotient has a compact core and
finitely many cusps.  After passing to a finite cover of a cusp cross-section
(which changes every estimate only by the degree of that cover), one cusp is
\begin{equation}\label{eq:cusp-model-prelim}
 \mathcal C=(\Lambda\backslash\mathbb R^q)\times[y_0,\infty),
 \qquad
 ds^2=y^{-2}(\dd y^2+\dd z^2),\qquad
 \dd\vol=y^{-q-1}\dd z\dd y,
\end{equation}
where $\Lambda$ is a full Euclidean lattice.  The parabolic displacement is
given exactly by
\begin{equation}\label{eq:parabolic-exact-distance}
 \cosh d((z,y),(z+v,y))=1+\frac{\abs v^2}{2y^2},
 \qquad
 d((z,y),(z+v,y))=2\operatorname{arsinh}\frac{\abs v}{2y}.
\end{equation}
In particular, for each fixed $R$,
\begin{equation}\label{eq:cusp-distance-prelim}
 d((z,y),(z+v,y))\asymp_R\frac{\abs v}{y}
 \quad\text{when }d((z,y),(z+v,y))\leq R.
\end{equation}

Choose the initial horoball precisely invariant under its parabolic
stabilizer $\Gamma_\infty$.  The depth function
$b(z,y)=\log(y/y_0)$ is a Busemann function and therefore is $1$-Lipschitz.
If $\gamma\notin\Gamma_\infty$, the interiors of the chosen horoball and
its $\gamma$-translate are disjoint.  A path from $(z,y)$ to
$\gamma(z,y)$ must descend through depth zero in the first horoball and
enter the second horoball from depth zero.  The Lipschitz property on the two
segments gives
\begin{equation}\label{eq:nonparabolic-distance-prelim}
 d((z,y),\gamma(z,y))\geq2\log(y/y_0).
\end{equation}
Replacing the chosen cusp neighborhood by a boundedly different one gives
the equivalent form $2\log y-O(1)$ used later.

Take the nonnegative Laplacian.  A tempered eigenfunction has eigenvalue
$q^2/4+T^2$.  Expanding in the characters of the torus and solving the
resulting ordinary differential equation gives
\begin{equation}\label{eq:Fourier-expansion-prelim}
 \phi(z,y)=a_0^+y^{q/2+\ii T}+a_0^-y^{q/2-\ii T}
 +y^{q/2}\sum_{0\ne m\in\Lambda^*}
 a_mK_{\ii T}(2\pi\abs m y)\ee^{2\pi\ii\ip mz}.
\end{equation}
Indeed, substituting $y^{q/2}F(2\pi\abs m y)$ into the separated radial
equation gives
\[
 u^2F''(u)+uF'(u)-(u^2-T^2)F(u)=0,
\]
whose solution decaying as $u\to\infty$ is $K_{\ii T}(u)$.  The other
solution $I_{\ii T}(u)$ grows exponentially and is excluded by $L^2$.
For the constant term, put $s=\log y$.  If $T>0$, then
\begin{align*}
 &\int_{\log y_0}^{S}
 \abs{a_0^+\ee^{\ii Ts}+a_0^-\ee^{-\ii Ts}}^2\dd s\\
 &\qquad=(\abs{a_0^+}^2+\abs{a_0^-}^2)(S-\log y_0)+O_T(1).
\end{align*}
This is the radial $L^2$ integral after the factors $y^{q/2}$ and
$y^{-q-1}\dd y$ are combined.  It stays bounded as $S\to\infty$ only when
\begin{equation}\label{eq:constant-term-zero}
 a_0^+=a_0^-=0
\end{equation}
for every $L^2$ eigenfunction with real $T$.

Parseval on $\Lambda\backslash\mathbb R^q$, followed by the substitution
$u=2\pi\abs m y$, gives
\begin{equation}\label{eq:cusp-parseval-prelim}
 \int_{\mathcal C}\abs\phi^2\dd\vol
 =\vol(\Lambda\backslash\mathbb R^q)
 \sum_{m\ne0}\abs{a_m}^2I_T(m),
 \quad
 I_T(m)=\int_{2\pi\abs m y_0}^{\infty}
 \abs{K_{\ii T}(u)}^2\frac{\dd u}{u}.
\end{equation}
This identity is the source of the simultaneous control of all Fourier
modes in the far cusp.

\section{Uniformity in the spherical estimate}
\label{s:marshall-uniformity}

For a fixed compact set in the Cartan variable,
\cref{prop:marshall-tracked} is Marshall's
\cite[Theorem~1.3]{Marshall2016}.  The point here is the explicit dependence
on the ball $B_R$.  This is needed below when the support of the kernel grows
with $T$.  We give the argument in full.  The local normal forms can be
quantified directly; the additional issue is the region away from them.  It
is handled by applying semialgebraic separation to the total critical
incidence in the phase and Cartan variables.

Throughout this section, fix
$C_1\Subset\mathfrak a^*_{\rm reg}$ and put
\[
 B_R=\{H\in\mathfrak a:\norm H\leq R\},
 \qquad M=Z_K(A),\qquad M'=N_K(A).
\]
Thus $W=M'/M$.

\subsection{Quantitative lemmas}

For a function on a fixed coordinate box $U$, write
\[
 \norm f_{C^N(U)}=
 \max_{\abs\nu\leq N}\sup_{x\in U}\abs{\partial^\nu f(x)}.
\]

\begin{lemma}[Quantitative coordinates]\label{lem:quantitative-calculus}
Fix $N$ and let $M\geq2$.
\begin{enumerate}[label=\textup{(\roman*)}]
\item If $f(0,y)=0$ and $\norm f_{C^{N+1}}\leq M$, then $f/x$ extends
smoothly across $x=0$ and has $C^N$ norm at most $M$.  Division by a product
of $q$ coordinate functions costs at most $q$ derivatives.
\item If a $C^{N+1}$ map has norm at most $M$ and the inverse of the minor
used in the inverse or implicit function theorem has norm at most $M$, then
the resulting map and its inverse are defined on a box of radius
$c_NM^{-B_N}$ and have $C^N$ norm at most $C_NM^{B_N}$.
\item If $V$ is a $C^{N+1}$ vector field of norm at most $M$, its flow is
defined for $|s|\leq cM^{-1}$ on the half-sized box and has $C^N$ norm at
most $C_NM^{B_N}$.  If a hypersurface is transverse to $V$ with transverse
derivative at least $M^{-1}$, the associated flow coordinates and their
inverse satisfy the same bounds on a box of radius $c_NM^{-B_N}$.
\end{enumerate}
The constants depend only on the fixed dimensions and $N$.
\end{lemma}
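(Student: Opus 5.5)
We prove the three parts separately. Each is a quantitative version of a standard calculus fact, and the only work is tracking how the norms propagate. The tool throughout is the Taylor formula with integral remainder, together with the Neumann-series and contraction-mapping proofs of the inverse function theorem. Every constant will come out polynomial in $M$, with exponents depending only on $N$ and the dimensions.

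For (i), we use Hadamard's lemma: $f(x,y)=f(0,y)+x\int_0^1\partial_xf(sx,y)\dd s$, so
\[
 f/x=\int_0^1\partial_xf(sx,y)\dd s.
\]
Differentiating under the integral, $\partial^\nu(f/x)=\int_0^1 s^{\nu_1}(\partial^\nu\partial_xf)(sx,y)\dd s$. Each term is bounded by $\norm f_{C^{N+1}}\leq M$ whenever $\abs\nu\leq N$, because the box is convex and star-shaped in $x$ about the hyperplane $x=0$. For the product of $q$ coordinates we iterate this $q$ times, dividing by one coordinate at each step. Each step uses one derivative, so $C^{N+q}$ control gives $C^N$ control. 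Here we read the phrase ``costs at most $q$ derivatives'' as saying that the input must be $C^{N+q}$.

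For (ii), suppose $F(0)=0$ and $L=DF(0)^{-1}$ with $\norm L\leq M$. We solve $F(x)=w$ by applying the contraction principle to $x\mapsto x-L(F(x)-w)$. Its derivative is $I-LDF(x)=L(DF(0)-DF(x))$, which has norm at most $M\cdot M\abs x$. It is therefore at most $1/2$ on the ball $\abs x\leq (2M^2)^{-1}$. This yields an inverse defined on a box of radius $cM^{-3}$, and the first derivative of the inverse is bounded by $2M$. Higher derivatives follow by repeatedly differentiating $DG=(DF\circ G)^{-1}$ (Faà di Bruno). The $k$-th derivative of $G$ is a polynomial in derivatives of $F$ of order at most $k$ and in $(DF\circ G)^{-1}$, with degree bounded in terms of $k$. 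This gives $C_NM^{B_N}$. The implicit function theorem reduces to the inverse function theorem applied to $(x,y)\mapsto(x,F(x,y))$. The minor involved has the same inverse bound.

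For (iii), Picard iteration shows the flow exists for time $\abs s\leq cM^{-1}$ starting from the half-sized box, since the speed is at most $M$ and the half-box lies at distance $\gtrsim1$ from the boundary. The variational equations $\frac{d}{ds}D^k\Phi_s=\dots$ are linear in the top-order derivative, with coefficients that are polynomial in lower-order terms. Grönwall's inequality over a time interval of length $O(M^{-1})$ then gives $\norm{D^k\Phi_s}\leq C_kM^{B_k}$. The flow coordinates are given by $(s,\sigma)\mapsto\Phi_s(\sigma)$, with $\sigma$ on the hypersurface. At $s=0$ the Jacobian of this map has the columns of $V$ and the tangent space of the hypersurface. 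Transversality $\geq M^{-1}$ bounds the inverse of this Jacobian by a power of $M$, because the cofactors are $O(M^{n})$ and the determinant is $\gtrsim M^{-1}$. Part (ii) then finishes.

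The only delicate point is the bookkeeping in the higher-derivative estimates for the inverse and for the flow. There one must check inductively that the exponents $B_N$ stay finite and independent of $M$. The Faà di Bruno induction handles this directly.
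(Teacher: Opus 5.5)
Your proposal is correct and takes essentially the same route as the paper. For (i) you differentiate and iterate Hadamard's formula $f/x=\int_0^1\partial_xf(sx,y)\dd s$; for (ii) you use a frozen-derivative contraction and then differentiate the inverse or implicit identity; for (iii) you use the flow's integral and variational equations with Gr\"onwall, then appeal to (ii) for the flow coordinates. The only difference is cosmetic: the paper solves for one scalar implicit variable at a time and uses Cramer's rule, while you treat the vector inverse function theorem directly with a Neumann-series bound and Fa\`a di Bruno, reducing the implicit case to $(x,y)\mapsto(x,F(x,y))$. That map's full Jacobian inverse is $O(M^2)$ rather than $O(M)$, which the exponent $B_N$ absorbs.
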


\begin{proof}
For (i), differentiate
\[
 \frac{f(x,y)}x=\int_0^1(\partial_xf)(sx,y)\dd s,
\]
and iterate.  For (ii), translate the base point and first take one implicit
variable with $|\partial_yF(0,0)|\geq M^{-1}$.  On a box of radius
$(4M^2)^{-1}$ this derivative remains at least $(2M)^{-1}$.  The contraction
map
\[
 y\longmapsto y-(\partial_yF(0,0))^{-1}F(x,y)
\]
then works for $|y|\leq(16M^2)^{-1}$ and
$|x|\leq(64M^4)^{-1}$.  Differentiating $F(x,h(x))=0$ and using Cramer's
rule gives the stated polynomial bounds.  Finally, the integral equation
for the flow and Gronwall's inequality give (iii); the coordinate assertion
follows from (ii).
\end{proof}

\begin{lemma}[Oscillatory integral]\label{lem:osc}
Let $a\in C_c^{d+L}((-1,1)^{d+e})$, and let
$q_1,\ldots,q_d,Q\in\R$, where $L$ is a nonnegative integer.  Then
\begin{align}
 &\left|\int a(x,y)
 \exp\left(\ii t\left(\sum_{j=1}^dq_jx_j^2+Qy_1\right)\right)
 \dd x\dd y\right|\notag\\
 &\qquad\leq C_{d,e,L}\norm a_{C^{d+L}}
 (1+t\abs Q)^{-L}
 \prod_{j=1}^d(1+t\abs{q_j})^{-1/2}.
 \label{eq:osc}
\end{align}
If the linear term is absent, its factor is omitted.
\end{lemma}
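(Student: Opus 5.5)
The integral factors as an $x$-part and a $y$-part, but $a$ need not factor. So I would first integrate by parts in $y_1$ to obtain the decay in $Q$, and then treat the $x$-integral, uniformly in the remaining variables, as a product of one-dimensional Fresnel-type integrals.

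\emph{Step 1 (the $y_1$ variable).} Suppose $t\abs Q\geq1$. Write
\[
\ee^{\ii tQy_1}=(\ii tQ)^{-1}\partial_{y_1}\ee^{\ii tQy_1}
\]
and integrate by parts $L$ times in $y_1$. There are no boundary terms because $a$ has compact support in the open box. The result is
\[
(\ii tQ)^{-L}\int(-\partial_{y_1})^La\,\ee^{\ii t(\sum q_jx_j^2+Qy_1)}\dd x\dd y .
\]
The new amplitude $b=\partial_{y_1}^La$ lies in $C_c^d$ with $\norm b_{C^d}\leq\norm a_{C^{d+L}}$. When $t\abs Q<1$, take $b=a$ and use $1\leq 2^L(1+t\abs Q)^{-L}$. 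In either case it remains to show that the integral with amplitude $b\in C_c^d$ and phase $t\sum q_jx_j^2$ (dropping the unimodular $y$-phase) is bounded by
\[
C\norm b_{C^d}\prod_j(1+t\abs{q_j})^{-1/2}.
\]
Fubini reduces this to a bound for fixed $y$, with the final integral over $y\in(-1,1)^e$ contributing a factor $2^e$.

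\emph{Step 2 (the Fresnel factors).} I would prove the following by induction on $d$: for $b\in C_c^d((-1,1)^d)$,
\[
\Bigl|\int b(x)\ee^{\ii t\sum_j q_jx_j^2}\dd x\Bigr|\leq C_d\norm b_{C^d}\prod_{j=1}^d(1+t\abs{q_j})^{-1/2}.
\]
The one-dimensional case is the key estimate. For $\lambda=tq$ and $f\in C_c^1(-1,1)$, I want
\[
\Bigl|\int f(x)\ee^{\ii\lambda x^2}\dd x\Bigr|\leq C(1+\abs\lambda)^{-1/2}\norm f_{C^1}.
\]
This is van der Corput's lemma for a phase with second derivative $2\lambda$: the oscillatory integral over any interval is $O(\abs\lambda^{-1/2})$, and one integration by parts against $f$ then gives the bound $\abs\lambda^{-1/2}(\sup\abs f+\int\abs{f'})$. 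For $\abs\lambda\leq1$ the trivial bound suffices.

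For the induction step, set
\[
g(x_1)=\int b(x_1,x')\ee^{\ii t\sum_{j\geq2}q_jx_j^2}\dd x'.
\]
Differentiating under the integral, $g'$ is the same kind of integral with amplitude $\partial_{x_1}b$. The inductive hypothesis applies to both $b$ and $\partial_{x_1}b$ in the $x'$ variables, using $d-1$ derivatives in those variables. This gives
\[
\norm g_{C^1}\leq C\norm b_{C^d}\prod_{j\geq2}(1+t\abs{q_j})^{-1/2},
\]
and the one-dimensional estimate in $x_1$ finishes the step. The total derivative count is therefore $d$ for the Fresnel factors plus $L$ for the linear term, which matches $C^{d+L}$.

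\emph{Main obstacle.} The step needing care is the derivative bookkeeping in the induction. Each coordinate must cost exactly one derivative, so the one-dimensional estimate has to be used in the $C^1$ form coming from van der Corput, not via stationary phase, which would need more derivatives. The amplitude's dependence on the other variables must also be carried through uniformly. If the linear term is absent, Step 1 is skipped and its factor is simply omitted.
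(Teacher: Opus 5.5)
Your proposal is correct and follows the paper's proof: a one-dimensional $C^1$ bound of size $(1+t\abs q)^{-1/2}$, iterated over the quadratic variables, plus $L$ integrations by parts in $y_1$ when $t\abs Q\geq1$. The only differences are minor. The paper proves the one-dimensional bound by hand, estimating $\abs x\leq(1+t\abs q)^{-1/2}$ by its length and integrating by parts with $(2\ii tqx)^{-1}\partial_x$ on the complement, where you quote van der Corput. You also do the $y_1$ integrations first, which is the cleaner order for keeping track of derivatives.
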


\begin{proof}
For one quadratic variable, estimate the interval
$\abs x\leq(1+t\abs q)^{-1/2}$ by its length and use
\[
 \ee^{\ii tqx^2}=(2\ii tqx)^{-1}\partial_x\ee^{\ii tqx^2}
\]
on the complement.  The result is
$C\norm a_{C^1}(1+t\abs q)^{-1/2}$.  Iterate in the quadratic variables
and integrate $L$ times in $y_1$ when $t|Q|\geq1$.
\end{proof}

We also use one standard fact from real algebraic geometry.  Recall that a
Nash function is a real analytic semialgebraic function.

\begin{lemma}[Global Nash separation]\label{lem:nash-separation}
Let $X\subset\R^n$ be a closed semialgebraic set, let
$F:X\rightarrow\R^m$ be continuous and semialgebraic, and put
$Z=F^{-1}(0)$.  Assume that $Z$ is nonempty.  There are constants $C,N>0$
such that
\begin{equation}\label{eq:nash-separation}
 \norm{F(x)}\geq C^{-1}(1+\norm x)^{-N}
 \min\{1,\operatorname{dist}(x,Z)\}^{N}
 \qquad (x\in X).
\end{equation}
\end{lemma}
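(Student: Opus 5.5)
This is a global version of the Łojasiewicz inequality, and I plan to derive it from the Łojasiewicz inequality for semialgebraic functions on compact sets by compactifying. The strategy is to control the two failure modes separately. One is the behaviour near $Z$ inside bounded regions, which is local Łojasiewicz. The other is the decay of $\norm{F}$ at infinity, which a semialgebraic growth argument bounds polynomially.

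\textbf{Step 1: reduce to a single semialgebraic function of one variable.}
Put $\delta(x)=\min\{1,\operatorname{dist}(x,Z)\}$; it is continuous and semialgebraic on $X$. For $s\ge0$ and $u\in(0,1]$, define
\[
 \mu(s,u)=\inf\{\norm{F(x)}:x\in X,\ \norm x\le s,\ \delta(x)\ge u\}.
\]
Take $\mu=+\infty$ when the set is empty. By Tarski--Seidenberg, $\mu$ is semialgebraic in $(s,u)$. Each defining set is compact, since $X$ is closed, the norm bound gives boundedness, and $\delta$ is continuous. On such a set $F$ does not vanish, because $\delta(x)\ge u>0$ forces $x\notin Z$. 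Hence $\mu(s,u)>0$. Moreover $\mu$ is nonincreasing in $s$ and nondecreasing in $u$.

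\textbf{Step 2: a polynomial lower bound for a positive semialgebraic function of two variables.}
I claim that $\mu(s,u)\ge C^{-1}(1+s)^{-N}u^N$ on $[0,\infty)\times(0,1]$. To prove it, take the diagonal substitution $g(t)=\mu(t,1/t)$ for $t\ge1$. This is a positive semialgebraic function of one variable. By the growth dichotomy for semialgebraic functions of one variable (Puiseux expansion at infinity), $g(t)\ge ct^{-N_0}$ for large $t$. On compact $t$-ranges, positivity and monotonicity give a uniform positive lower bound, since a monotone positive function has positive infimum on a compact interval when evaluated at its endpoint.

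Now take general $(s,u)$ and set $t=\max\{1+s,1/u\}$. Then $s\le t$ and $u\ge1/t$. Monotonicity gives $\mu(s,u)\ge g(t)\ge c\,t^{-N_0}$. Finally,
\[
 t^{-N_0}\ge(1+s)^{-N_0}u^{N_0},
\]
because $t\le(1+s)/u$ when $u\le1$.

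\textbf{Step 3: conclude.}
Given $x\in X\setminus Z$, apply Step 2 with $s=\norm x$ and $u=\delta(x)$; since $x$ itself lies in the defining set, $\norm{F(x)}\ge\mu(\norm x,\delta(x))$, which gives \eqref{eq:nash-separation}. For $x\in Z$ both sides vanish. The hypothesis $Z\ne\emptyset$ guarantees that $\operatorname{dist}(\cdot,Z)$ is finite.

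\textbf{Main obstacle.}
The delicate point is Step 2: making sure the monotone envelope is semialgebraic and that the one-variable growth lemma applies. That lemma says a semialgebraic $g:(a,\infty)\to(0,\infty)$ is eventually comparable to $t^{p/q}$ for some rational exponent. I will cite it from Bochnak--Coste--Roy. Because semialgebraic functions are definable, one-variable functions are piecewise monotone with Puiseux asymptotics, so only finitely many pieces have to be checked. The monotonicity of $\mu$ lets us avoid any genuinely two-variable Łojasiewicz argument.
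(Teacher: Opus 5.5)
Your proposal is correct and follows the paper's proof: both use the minimum of $\norm{F}$ over the compact sets $\{x\in X:\norm x\le s,\ \operatorname{dist}(x,Z)\ge u\}$, Tarski--Seidenberg for its semialgebraicity, a polynomial lower bound for this minimum, and evaluation at $s=\norm x$, $u=\min\{1,\operatorname{dist}(x,Z)\}$. Your monotone diagonal reduction to the one-variable function $g(t)=\mu(t,1/t)$ is, if anything, a cleaner justification of the step the paper states only as \emph{polynomial growth at infinity on each cell of continuity}, since it sidesteps the fact that this minimum function need not be continuous.
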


\begin{proof}
For $T,u\geq1$, consider the compact semialgebraic set
\[
 E_{T,u}=\{x\in X:\norm x\leq T,
       \operatorname{dist}(x,Z)\geq u^{-1}\}.
\]
Let $m(T,u)$ be the minimum of $\norm F$ on $E_{T,u}$, with value $1$ if
the set is empty.  By Tarski--Seidenberg, $m$ is semialgebraic and positive.
On each cell of continuity, polynomial growth at infinity gives
\[
 m(T,u)^{-1}\leq C(1+T+u)^N.
\]
Taking $T=1+\norm x$ and
$u=\max\{1,\operatorname{dist}(x,Z)^{-1}\}$ proves
\eqref{eq:nash-separation}.  See
\cite[Proposition~2.6.2 and Corollary~2.6.7]{BCR}.
\end{proof}

\subsection{The phase and amplitude}

Use the $NAK$ Iwasawa decomposition
\begin{equation}\label{eq:Iwasawa}
 g=n(g)\exp\mathcal A(g)k(g).
\end{equation}
The Harish--Chandra formula is
\begin{equation}\label{eq:HC}
 \varphi_{tc}(\exp H)
 =\int_K b(k,H)\ee^{\ii t\Theta(k,H,c)}\dd k,
 \quad
 \Theta(k,H,c)=c(\mathcal A(k\exp H)),
 \quad
 b(k,H)=\ee^{\rho(\mathcal A(k\exp H))}.
\end{equation}

\begin{lemma}\label{lem:phase}
For every fixed $N$,
\begin{equation}\label{eq:phase-bound}
 \norm\Theta_{C^N(K\times B_R\times C_1)}
 +\norm b_{C^N(K\times B_R)}
 \leq C_N\ee^{C_NR}.
\end{equation}
\end{lemma}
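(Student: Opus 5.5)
The plan is to reduce everything to bounds on the Iwasawa projection $\mathcal A(k\exp H)$ and its derivatives on $K\times B_R$. The amplitude $b=\ee^{\rho\circ\mathcal A}$ and the phase $\Theta=c\circ\mathcal A$ are compositions of $\mathcal A$ with, respectively, a linear form followed by the exponential and a linear form depending linearly on $c\in C_1$. So it suffices to prove
\[
 \norm{\mathcal A(k\exp H)}_{C^N(K\times B_R)}\leq C_N\ee^{C_NR}.
\]
Granting this, $\Theta$ is bilinear in $(c,\mathcal A)$ with $c$ ranging over a compact set, so its $C^N$ norm is controlled by the bound above. For $b$, the chain rule (Fa\`a di Bruno) gives derivatives of order at most $N$ as $\ee^{\rho(\mathcal A)}$ times a polynomial of degree at most $N$ in derivatives of $\mathcal A$. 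Moreover $\abs{\rho(\mathcal A(k\exp H))}\leq\norm\rho\norm H\leq\norm\rho R$, using the standard fact that $\mathcal A(k\exp H)$ lies in the convex hull of $W\cdot H$ (Kostant convexity). This yields $C_N\ee^{C_NR}$ after enlarging $C_N$.

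To bound $\mathcal A$ and its derivatives, first realise $G$ linearly, say via $\ad$ on $\mathfrak g$ (finite center does not matter locally), and choose a basis adapted to the Iwasawa decomposition. For $g=k\exp H$ the matrix entries are sums of terms $\ee^{\mu(H)}\times$ (smooth bounded functions of $k$), with $\mu$ ranging over the weights. Hence every $C^N$ norm of $g$ in $(k,H)$ is at most $C_N\ee^{C R}$, and the same holds for $g^{-1}$.

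Next express $\mathcal A$ algebraically. For the fundamental highest weights $\omega_i$ of suitable representations (Harish-Chandra / Kostant), one has
\[
 \ee^{2\omega_i(\mathcal A(g))}=\norm{\pi_i(g)^{*}v_i}^2=\langle \pi_i(g)^{-1*}\cdots\rangle,
\]
with the convention matching \eqref{eq:Iwasawa}, $v_i$ a $K$-fixed-norm highest weight vector. Equivalently, one can use the Gram–Schmidt / Cholesky factorisation of $g^{t}g$ in the adapted basis, whose diagonal entries are ratios of minors of $g$. Either way, $\omega_i(\mathcal A(k\exp H))=\tfrac12\log P_i(k,H)$, where $P_i$ is a polynomial in the entries of $\pi_i(k\exp H)$. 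Its $C^N$ norm is therefore at most $C_N\ee^{CR}$, and it is bounded below by $\ee^{-CR}$ by the convexity bound. Differentiating the logarithm, each derivative brings at most a factor $P_i^{-1}\leq\ee^{CR}$, so $\norm{\omega_i\circ\mathcal A}_{C^N}\leq C_N\ee^{C_NR}$. The $\omega_i$ span $\mathfrak a^*$, so the same bound holds for $\mathcal A$.

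The main point needing care is this last step: the \emph{lower} bound on $P_i$, since negative powers of it appear. I would obtain it by noting $P_i=\ee^{2\omega_i(\mathcal A)}$ together with $\abs{\omega_i(\mathcal A(k\exp H))}\leq\norm{\omega_i}R$ from Kostant's convexity theorem. If convexity is to be avoided, an alternative is the direct estimate $\norm{\mathcal A(g)}\leq d_X(gK,K)$, which holds because $\mathcal A$ is a $1$-Lipschitz Busemann-type projection; this gives the same lower bound.
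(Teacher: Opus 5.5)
Your proposal is correct and is essentially the paper's argument. The paper likewise writes $\omega_j(\mathcal A(k\exp H))=\log\norm{v_j}-\tfrac12\log F_j$ with $F_j=\norm{\pi_j((k\exp H)^{-1})v_j}^2$ for $N$-fixed highest weight vectors, bounds $F_j$ above and below by $C\ee^{\pm CR}$ together with its $C^N$ norms, and differentiates the logarithm. The only difference is that it gets the lower bound from the elementary inequality $\norm{v_j}\leq\norm{\pi_j(k\exp H)}\,\norm{\pi_j((k\exp H)^{-1})v_j}$ rather than from Kostant convexity or the Lipschitz property of $\mathcal A$.

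One correction: for the $NAK$ convention \eqref{eq:Iwasawa} the right identity is $\norm{\pi_i(g^{-1})v_i}=\ee^{-\omega_i(\mathcal A(g))}\norm{v_i}$. Your $\norm{\pi_i(g)^*v_i}$ is not a function of $\mathcal A(g)$ alone, since $\pi_i(g)^*=\pi_i(\theta(g)^{-1})$ and $\theta(n)^{-1}\in\bar N$ does not fix $v_i$. This slip does not affect the rest of your argument.
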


\begin{proof}
After passing to a finite cover of $G$, choose finite-dimensional
representations $\pi_j$ with restricted highest weights $\omega_j$ spanning
$\mathfrak a^*$.  Give their spaces $K$-invariant norms and choose vectors
$v_j$ satisfying
\[
 \pi_j(n)v_j=v_j,
 \qquad
 \pi_j(\exp U)v_j=\ee^{\omega_j(U)}v_j.
\]
Inserting \eqref{eq:Iwasawa} and using the $K$-invariant norm gives
\begin{equation}\label{eq:weight-formula}
 \ee^{\omega_j(\mathcal A(g))}
 =\frac{\norm{v_j}}{\norm{\pi_j(g^{-1})v_j}}.
\end{equation}
For $g=k\exp H$ with $\norm H\leq R$,
\begin{equation}\label{eq:rep-bound}
 \norm{\pi_j(g)}+\norm{\pi_j(g^{-1})}\leq C\ee^{CR}.
\end{equation}
Put
$F_j(k,H)=\norm{\pi_j((k\exp H)^{-1})v_j}^2$.  Equation
\eqref{eq:rep-bound} and
$\norm{v_j}\leq\norm{\pi_j(g)}\norm{\pi_j(g^{-1})v_j}$ give
\begin{equation}\label{eq:F-bound}
 C^{-1}\ee^{-CR}\leq F_j\leq C\ee^{CR},
 \qquad
 \norm{F_j}_{C^N}\leq C_N\ee^{C_NR}.
\end{equation}
Taking logarithms in \eqref{eq:weight-formula},
\begin{equation}\label{eq:logF}
 \omega_j(\mathcal A(k\exp H))
 =\log\norm{v_j}-\frac12\log F_j(k,H).
\end{equation}
Every fixed derivative of $\log F_j$ is a sum of terms
\[
 F_j^{-q}\prod_{i=1}^q\partial^{\nu_i}F_j.
\]
Equation \eqref{eq:F-bound} bounds each such term by $C_N\ee^{C_NR}$.
Since the $\omega_j$ span $\mathfrak a^*$, the same bound holds for the
derivatives of $\mathcal A$.  Applying the chain rule to the phase and amplitude
in \eqref{eq:HC} proves \eqref{eq:phase-bound}.
\end{proof}

We next record the algebraic property that will replace the qualitative
compactness argument on the nonstationary region.  Choose a basis
$Y_1,\ldots,Y_d$ of $\mathfrak k$, viewed as left-invariant vector fields,
and set
\begin{equation}\label{eq:gradient-map}
 \mathcal F(k,a,c)=
 \bigl(Y_1\Theta(k,\log a,c),\ldots,
       Y_d\Theta(k,\log a,c)\bigr),
 \qquad a\in A.
\end{equation}
The notation in \eqref{eq:gradient-map} is only used to identify
$a=\exp H$; the logarithm does not enter the assertion below.

\begin{lemma}\label{lem:nash-gradient}
There is a compact semialgebraic set $C_\eta\Subset\mathfrak a^*_{\rm reg}$
containing $C_1$ such that $\mathcal F$ is a Nash map on
$K\times A\times C_\eta$.  Its zero set
\begin{equation}\label{eq:critical-incidence}
 \mathcal Z=\{(k,a,c):\mathcal F(k,a,c)=0\}
\end{equation}
is the total critical incidence.  In a fixed algebraic embedding of
$K\times A\times C_\eta$ into Euclidean space,
\begin{equation}\label{eq:height-bound}
 \norm{(k,\exp H,c)}\leq C\ee^{CR}
 \qquad (\norm H\leq R).
\end{equation}
Moreover, if
\begin{equation}\label{eq:H-incidence}
 \mathcal Z_H=\{(k,H,c):k\in M'K_H\},
\end{equation}
then $\mathcal Z_H$ is closed and semialgebraic.  The map
$H\mapsto\exp H$ between $B_R$ and the algebraic $A$-coordinates used above
is bi-Lipschitz, with both Lipschitz constants at most $C\ee^{CR}$.
\end{lemma}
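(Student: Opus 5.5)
To prove \cref{lem:nash-gradient}, I would first make everything algebraic by realizing $G$ as a linear algebraic group. After passing to a finite cover as in \cref{lem:phase}, I take a faithful representation in which $K$ and $A$ become real algebraic sets. $K$ is a compact real algebraic group, and $A$ is the identity component of the real points of a split torus, hence semialgebraic. Then $A\to\R^r_{>0}$, $a\mapsto(\ee^{\omega_j(\log a)})_j$, is a polynomial isomorphism onto the positive orthant. I define $C_\eta=\{c:\abs{\alpha(c)}\geq\eta\ \forall\alpha,\ \norm c\leq\eta^{-1}\}$, intersected with the chamber containing $C_1$, with $\eta\leq\eta_{C_1}$ taken from \eqref{eq:regular-margin}. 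This set is compact, semialgebraic (cut out by linear inequalities and a quadratic one), regular, and contains $C_1$.

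For the Nash property, I would use \eqref{eq:logF}. The phase is
\[
\Theta(k,\log a,c)=\sum_j c_j\bigl(\log\norm{v_j}-\tfrac12\log F_j\bigr),
\]
where $c=\sum c_j\omega_j$. The function $\Theta$ itself is not semialgebraic because of the logarithm, but its derivatives are. Indeed, $F_j(k,a)=\norm{\pi_j(a^{-1}k^{-1})v_j}^2$ is a polynomial in the matrix entries of $(k,a)$, since the norm is a $K$-invariant Hermitian norm. Then
\[
Y_i\Theta=-\tfrac12\sum_jc_j\,Y_iF_j/F_j,
\]
and $Y_iF_j$ is again polynomial, because left-invariant vector fields on an algebraic group act algebraically on coordinate functions. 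So $\mathcal F$ is a rational map whose denominators $F_j$ are positive by \eqref{eq:F-bound}. It is therefore real analytic and semialgebraic, i.e.\ Nash, and consequently $\mathcal Z$ is closed and semialgebraic.

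The height bound \eqref{eq:height-bound} is immediate from the construction. $K$ and $C_\eta$ are bounded, and the entries of $\exp H$ in the chosen representation are $\ee^{\mu(H)}$ for finitely many weights $\mu$, so each is at most $\ee^{CR}$. The bi-Lipschitz claim follows by the same reasoning. In coordinates $a_j=\ee^{\omega_j(H)}$, the derivative of $H\mapsto a$ is $\operatorname{diag}(a_j)$ composed with a fixed invertible linear map. Its norm and the norm of its inverse are bounded by $\ee^{CR}$ on $B_R$, and both $B_R$ and its image are convex in suitable coordinates (the image after a logarithm), so integrating along segments gives the Lipschitz constants.

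The main obstacle is the statement about $\mathcal Z_H$. It must identify the critical set $\{k:\mathcal F(k,\exp H,c)=0\}$ with $M'K_H$, where $K_H$ is the centralizer of $H$ in $K$. This is the Duistermaat--Kolk--Varadarajan description of the critical points of $k\mapsto c(\mathcal A(k\exp H))$ for regular $c$. I would cite it rather than reprove it (with the paper's convention $k\exp H$ in place of $\exp H\,k$, adjusting by inversion). Granting it, $\mathcal Z_H$ is the image of $\mathcal Z$ under the bi-Lipschitz identification. Alternatively, $\mathcal Z_H$ can be written directly as the set of triples with $k\in\bigcup_{w\in W}m_wZ_K(H)$, where $Z_K(H)=\{k:\ad(k)H=H\}$ is algebraic in $(k,H)$. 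A finite union of such sets is closed and semialgebraic. The care needed is only in checking that the independence from $c$ really uses regularity of $c\in C_\eta$, which is why $C_\eta$ is kept inside the open chamber.
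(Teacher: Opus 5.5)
Your proposal is correct and follows the paper's proof essentially step for step. You use the same compact regular set $C_\eta$ and character coordinates on $A$ for the height and bi-Lipschitz bounds. You cite the same identification of the critical fibre with $M'K_H$: the paper takes it from Marshall's Proposition~4.3, you take it from Duistermaat--Kolk--Varadarajan. And you use the algebraic equation $\ad(k)H=H$ (over finitely many Weyl representatives) to make $\mathcal Z_H$ closed and semialgebraic.

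The only real variation is how you get the Nash property. You use the explicit rational formula $Y_i\Theta=-\tfrac12\sum_j c_jY_iF_j/F_j$ coming from \eqref{eq:logF}. The paper instead uses the Nash inverse of $N\times A\times K\to G$ together with $d\chi_j/\chi_j$. One caution: rely on your direct description of $\mathcal Z_H$, not on transporting $\mathcal Z$ through $\log$, since that map is not semialgebraic.
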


\begin{proof}
Let $M_0$ bound $\norm c$ on $C_1$, and let $\eta>0$ be smaller than the
minimum in \eqref{eq:regular-lower} below.  We may take
\[
 C_\eta=\{c:\norm c\leq M_0+1,
       \abs{\ip c\alpha}\geq\eta\text{ for every }\alpha\in\Sigma\}.
\]
This is compact and semialgebraic.

After passing to the same finite cover as above, regard $G$ as a real
algebraic matrix group.  The multiplication map $N\times A\times K\to G$
is a Nash diffeomorphism, and hence its inverse, including the $A$-component
$\mathbf a(g)$, is Nash.  Choose algebraic characters
$\chi_1,\ldots,\chi_r$ of $A$ whose differentials span $\mathfrak a^*$.
If $a=\mathbf a(g)$, then
\[
 c(\log a)=\sum_{j=1}^r c_j\log\chi_j(a)
\]
for suitable linear coordinates $c_j$ on $\mathfrak a^*$.  Differentiating
in a $K$-direction replaces $\log\chi_j(a)$ by
$d\chi_j(a)/\chi_j(a)$, which is Nash.  Thus every component in
\eqref{eq:gradient-map} is Nash.  Its zero set is exactly the set where the
$K$-gradient of the phase vanishes, proving \eqref{eq:critical-incidence}.
Marshall's Proposition~4.3 identifies its fibre over $(\exp H,c)$ with
$M'K_H$.

The condition $u\in K_H$ is the algebraic equation $\ad(u)H=H$.
Since $M'$ and $K$ are compact algebraic groups, \eqref{eq:H-incidence} is
closed and semialgebraic.  In character coordinates the derivatives of
$H\mapsto\exp H$ and of its inverse are bounded by $C\ee^{CR}$ on $B_R$.

Embed $A$ by the coordinates
$\chi_j(a)$ and $\chi_j(a)^{-1}$.  The image is closed and semialgebraic.
Every character occurring here is the exponential of a fixed linear form
in $H$, so \eqref{eq:height-bound} follows.  The $K$ and $c$ coordinates
range over fixed compact sets.
\end{proof}

\subsection{Normal-form inputs}

We use the following parts of \cite[Section~4]{Marshall2016}.

Marshall resolves the root hyperplanes in $\mathfrak a$ by finitely many
charts associated with flags $F$.  In a resolved chart the coordinates are
$z_0,\ldots,z_{r-1}$ with
\begin{equation}\label{eq:z-domain}
 0\leq z_0\leq C(1+R),
 \qquad 0\leq z_j\leq1\quad(j\geq1),
\end{equation}
and every root has the form
\begin{equation}\label{eq:root-resolution}
 \alpha(H)=u_{\alpha,F}(z)z_0z_1\cdots z_{j(\alpha,F)-1},
\end{equation}
where $u_{\alpha,F}$ and its inverse have bounded derivatives of every fixed
order.  This is \cite[Section~4.2.2 and Lemma~4.6]{Marshall2016}.

The quadratic induction divides a phase derivative by the corresponding
root coordinate.  In the notation of \cite[Proposition~4.9]{Marshall2016},
\begin{equation}\label{eq:Marshall-psi}
 \psi=(w^{-1}\beta_X)^{-1}V_\beta^+\Theta_1.
\end{equation}
The numerator vanishes on the divisor $w^{-1}\beta_X=0$, and the derivative
inverted in the implicit function theorem is
\begin{equation}\label{eq:Marshall-pivot}
 V_\beta^+\psi(l,s)
 =-\ip c\beta\ee^x\frac{\sinh x}{x},
 \qquad x=w^{-1}\beta(H),
\end{equation}
with its continuous value at $x=0$.  See
\cite[Lemmas~4.10--4.13]{Marshall2016}; the same normalized coefficient is
used for the Morse coordinate.

For $l\in K$, after all quadratic variables have been removed, the residual phase is
divided by the monomial $Q=z_0\cdots z_q$ defining the current Levi stratum.
The nonvanishing derivative needed to make the final linear coordinate is
given by \cite[Lemmas~4.16--4.19]{Marshall2016}.  It has the form
\begin{equation}\label{eq:Marshall-linear}
 \alpha(H^L)
 \ip{\ad(l^{-1})H_c}{V_{-\alpha}-V_\alpha},
\end{equation}
and at least one of these expressions is nonzero.  Here $H_c\in\mathfrak a$
is dual to $c$, $L$ is the Levi subgroup associated with $l$, and $H^L$ is
the normalized direction transverse to the split centre of $L$.

The resulting normal forms are \cite[Theorem~4.7 and
Proposition~4.22]{Marshall2016}.  At a Weyl point,
\begin{equation}\label{eq:Weyl-normal}
 \Theta=\Theta_0(H,c)
 -\sum_{\alpha\in\Sigma^+}\sum_{j=1}^{m_\alpha}
  \ip c\alpha\alpha(wH)x_{\alpha,j}^2.
\end{equation}
and away from the Weyl points,
\begin{equation}\label{eq:nonWeyl-normal}
 \Theta=\Theta_0(H,c)
 -\sum_{(\alpha,j)\in\mathcal R}
  \ip c\alpha\alpha(H)x_{\alpha,j}^2+Q(H)L(y),
\end{equation}
where $L$ is a nonconstant affine-linear function and $\mathcal R$ is the
set of quadratic root directions on that stratum.  Marshall's
\cite[Proposition~4.20]{Marshall2016} also gives, on each such chart,
\begin{equation}\label{eq:root-comparison}
 \abs{\alpha(H)}\leq C\abs{Q(H)}
 \quad\text{for every root direction not in }\mathcal R.
\end{equation}
The comparison follows from the smooth quotient $\alpha/Q$ on the resolved
face and, in the remaining directions, from
$|\alpha(H)|\ll\norm{H^L}\asymp|Q|$ in the proof of that proposition.
Both comparisons depend only on the angular variables, so the constant is
independent of $R$.

\subsection{Quantitative coordinate bounds}

We now deduce the dependence on $R$ from the preceding inputs.

First pull the phase and amplitude back to the resolved coordinates
\eqref{eq:z-domain}.  The resolution map is polynomial in $z_0$ and smooth
in the other variables.  Its fixed derivatives are therefore polynomial in
$1+R$, and hence bounded by $C_N\ee^{C_NR}$.  Together with
\eqref{eq:phase-bound}, this gives
\begin{equation}\label{eq:resolved-phase}
 \norm\Theta_{C^N}+\norm b_{C^N}\leq C_N\ee^{C_NR}
\end{equation}
on every resolved chart.

Consider one step of the induction \eqref{eq:Marshall-psi}.  By
\eqref{eq:root-resolution}, the divisor $w^{-1}\beta_X$ is a monomial in at
most $r$ resolved coordinates times a smooth unit.  The numerator vanishes
on every component of that divisor by Marshall's Lemma~4.10.  Applying
part (i) of \cref{lem:quantitative-calculus} at most $r$ times to
\eqref{eq:resolved-phase} gives
\begin{equation}\label{eq:psi-bound}
 \norm\psi_{C^N}\leq C_N\ee^{C_NR}.
\end{equation}
This proves the required upper bound for the divided function without
introducing a factor $\abs{\beta(H)}^{-1}$.

For the inverse derivative, put
$p(x)=\ee^x\sinh(x)/x$, with $p(0)=1$.  Since
$p(x)=(\ee^{2x}-1)/(2x)$, one has $p(x)\geq1$ for $x\geq0$ and
$p(x)^{-1}\ll1+|x|$ for $x<0$.  As $|x|\ll R$ and
\begin{equation}\label{eq:regular-lower}
 \eta:=\min_{c\in C_1}\min_{\alpha\in\Sigma}
 \abs{\ip c\alpha}>0,
\end{equation}
the pivot \eqref{eq:Marshall-pivot} satisfies
\begin{equation}\label{eq:pivot-bounds}
 \abs{(V_\beta^+\psi)^{-1}}\leq C(1+R),
 \qquad
 \norm{V_\beta^+\psi}_{C^N}\leq C_N\ee^{C_NR}.
\end{equation}
The derivative bound also follows from \eqref{eq:psi-bound}.

Parts (ii) and (iii) of \cref{lem:quantitative-calculus}, applied with
\eqref{eq:psi-bound} and \eqref{eq:pivot-bounds}, give exponential bounds
for the implicit, flow, and Morse coordinates in
\cite[Lemmas~4.12--4.13]{Marshall2016}.  Taking a square root in the Morse
step has the same cost because the normalized coefficient and its inverse
have exponential seminorms.  There are at most
$d_0=\sum_{\alpha>0}m_\alpha$ steps, independently of $R$.  Hence all
quadratic coordinate maps and their inverses have $C^N$ norm at most
\begin{equation}\label{eq:quadratic-map-bound}
 C_N\ee^{C_NR}
\end{equation}
on boxes of radius at least $\ee^{-C_NR}$.

The regular trivialization in \cite[Theorem~4.7]{Marshall2016} has the same
bounds.  Indeed, use the surviving $K$-coordinates and the unchanged
parameter coordinates on the submanifold of Corollary~4.15.  Its Jacobian
is block triangular, with identity on the parameter block and the preceding
coordinate changes on the other block.

For the final linear coordinate, divide the residual phase by
$Q=z_0\cdots z_q$.  It vanishes on every component of $Q=0$, so part (i)
of \cref{lem:quantitative-calculus} and
\eqref{eq:quadratic-map-bound} bound the quotient by $C_N\ee^{C_NR}$.

Choose Weyl neighbourhoods $\mathcal U_0\Subset\mathcal U_1$ and use
\cite[Proposition~4.22]{Marshall2016} on $\mathcal U_1$.  On the remaining
charts, $l\in K\setminus\mathcal U_0$.  After $Q$ has been removed, the
quantity \eqref{eq:Marshall-linear} depends only on $l$, the resolved
angular variables, and $c$.  These variables range over a compact set for
each of the finitely many flag charts and Levi types.  Lemma~4.19 of
\cite{Marshall2016} says that at least one of the quantities is nonzero.
A finite subcover therefore gives
\begin{equation}\label{eq:linear-lower}
 \max_\alpha
 \left|
 \alpha(H^L)
 \ip{\ad(l^{-1})H_c}{V_{-\alpha}-V_\alpha}
 \right|\geq\kappa
\end{equation}
for some $\kappa>0$ independent of $R$.  Part (ii) of
\cref{lem:quantitative-calculus} now gives the same exponential bounds for
the final linear coordinate and its inverse.


\subsection{The local integrals}

We now apply the above analysis to the normal forms
\eqref{eq:Weyl-normal} and \eqref{eq:nonWeyl-normal}.

In a Weyl chart, apply \cref{lem:osc} to
\eqref{eq:Weyl-normal}.  By \eqref{eq:regular-lower}, the coefficient
$\ip c\alpha$ can be absorbed in the implied constant.  There are
$m_\alpha$ quadratic variables for each $\alpha$.  Since $w$ permutes the
restricted roots and preserves their multiplicities, relabelling the roots
replaces the factors $\abs{\alpha(wH)}$ by $\abs{\alpha(H)}$.  Therefore the
chart contributes at most
\begin{equation}\label{eq:local-product}
 C\ee^{CR}
 \prod_{\alpha\in\Sigma^+}
       (1+t\abs{\alpha(H)})^{-m_\alpha/2}.
\end{equation}

In a non-Weyl chart, make a fixed linear change of the residual variables
so that the nonconstant affine-linear function $L(y)$ in
\eqref{eq:nonWeyl-normal} is the first coordinate.  Its Jacobian and inverse
are uniformly bounded by \eqref{eq:linear-lower}.  Lemma~\ref{lem:osc}
then gives, for every fixed integer $L_0$,
\begin{equation}\label{eq:local-preproduct}
 C_{L_0}\ee^{C_{L_0}R}
 (1+t\abs Q)^{-L_0}
 \prod_{(\alpha,j)\in\mathcal R}
       (1+t\abs{\alpha(H)})^{-1/2}.
\end{equation}
Choose
\begin{equation}\label{eq:L0}
 L_0\geq\frac12
 \sum_{(\alpha,j)\notin\mathcal R}1.
\end{equation}
By \eqref{eq:root-comparison},
\[
 1+t\abs{\alpha(H)}\leq C(1+t\abs Q)
 \qquad ((\alpha,j)\notin\mathcal R).
\]
Consequently,
\begin{equation}\label{eq:Q-pays}
 (1+t\abs Q)^{-L_0}
 \leq C
 \prod_{(\alpha,j)\notin\mathcal R}
       (1+t\abs{\alpha(H)})^{-1/2}.
\end{equation}
Equations \eqref{eq:local-preproduct} and \eqref{eq:Q-pays} give
\eqref{eq:local-product}.

It remains to estimate the complement of the normal-form boxes.  The boxes
are constructed along the total critical incidence $\mathcal Z$ in
\eqref{eq:critical-incidence}.  Thus points that become almost critical as
$H$ approaches a root wall are included in boxes based on that wall.

Take concentric inner and outer boxes.  If $r_R\geq\ee^{-C_1R}$ is an outer
radius and $L_R\leq\ee^{C_2R}$ bounds the $C^1$ norm of the coordinate map,
then the inner boxes contain the $r_R/(4L_R)$-neighbourhood of the lifted
incidence.  The blowdown map and $\mathcal Z_H$ in
\eqref{eq:H-incidence} are semialgebraic.  Applying
\cref{lem:nash-separation} to the pullback of the distance from
$\mathcal Z_H$, and using that the resolved radial coordinate is $O(1+R)$,
shows that the images of the inner boxes contain the
\begin{equation}\label{eq:critical-tube}
 \ee^{-C R}\text{-neighbourhood of }\mathcal Z_H
\end{equation}
inside $K\times B_R\times C_1$.  The remaining amplitude is therefore
supported outside this tube and has fixed derivatives $O(\ee^{CR})$.

The bi-Lipschitz comparison in \cref{lem:nash-gradient} converts
\eqref{eq:critical-tube} into an $\ee^{-CR}$ separation from $\mathcal Z$
in the algebraic $A$-coordinates.  Apply \cref{lem:nash-separation} to the
Nash gradient map $\mathcal F$.  Equations \eqref{eq:height-bound} and
\eqref{eq:critical-tube} give
\begin{equation}\label{eq:gradient-lower}
 \norm{\nabla_k\Theta}\geq \ee^{-CR}.
\end{equation}
Here and below the constant in the exponent may increase.

Let
\[
 V=\frac{\nabla_k\Theta}{\norm{\nabla_k\Theta}^2},
 \qquad V\Theta=1,
\]
and let $V^*$ be its formal adjoint.  Then
\[
 \mathcal Lf=(\ii t)^{-1}V^*f,
 \qquad
 \int f\ee^{\ii t\Theta}=\int \mathcal Lf\,\ee^{\ii t\Theta}.
\]
Every fixed derivative of $V$ is a sum of products of derivatives of
$\Theta$ and powers of $\norm{\nabla_k\Theta}^{-1}$.  Equations
\eqref{eq:phase-bound} and \eqref{eq:gradient-lower} therefore show that a
fixed number of integrations by parts costs at most $C\ee^{CR}$.  Put
$d_0=\sum_{\alpha>0}m_\alpha$ and take $L_1\geq d_0/2+1$ applications.  The
result is $C\ee^{CR}t^{-L_1}$.  Since $\abs{\alpha(H)}\leq CR$,
\[
 \prod_{\alpha>0}(1+t\abs{\alpha(H)})^{-m_\alpha/2}
 \geq C(1+tR)^{-d_0/2}.
\]
For $t,R\geq1$,
$t^{-L_1}\leq C(1+R)^{d_0/2}(1+tR)^{-d_0/2}$, and the polynomial in $R$ is
absorbed by $\ee^{CR}$.  This proves \eqref{eq:local-product} on the
nonstationary part.

\subsection{Conclusion}

The resolved parameter space has fixed dimension.  Its angular variables
and $c\in C_1$ range over fixed compact sets, while its radial coordinate
has length $O(1+R)$ by \eqref{eq:z-domain}.  Since the normal-form radius is
at least $\delta_R=\ee^{-CR}$, a maximal $\delta_R/2$-separated set has
$O((1+R)^d\delta_R^{-d})=O(\ee^{CR})$ points.  The associated cover has
bounded overlap.

Rescaling a fixed bump function to radius $\delta_R$ and dividing by the
sum of the bumps gives a subordinate partition of unity with fixed
derivatives $O_N(\ee^{C_NR})$.  These derivatives are included in the
amplitude bound in \eqref{eq:local-product}.

Each chart therefore satisfies \eqref{eq:local-product}, and there are at
most $C\ee^{CR}$ charts.  Summing them and increasing the constant in the
exponent gives
\[
 \abs{\varphi_{tc}(\exp H)}
 \leq D\ee^{AR}
 \prod_{\alpha\in\Sigma^+}
       (1+t\abs{\alpha(H)})^{-m_\alpha/2}.
\]
This is \eqref{eq:marshall-tracked} and proves \cref{prop:marshall-tracked}.

\begin{remark}[Regularity]
The compact regular set $C_1$ is used in three places in the normal forms:
the description of the critical set, the lower bound for
$\ip c\beta$ in \eqref{eq:Marshall-pivot}, and the proper-Levi argument in
Lemma~4.19 that gives \eqref{eq:Marshall-linear}.  The second and third uses
are precisely the inverse derivatives needed in the quadratic and linear
coordinate changes.
\end{remark}

\section{The automorphic kernel}\label{s:Kernel}

We now give the construction used throughout the paper.  Fix a compact set
$C\Subset\mathfrak a^*_{\rm reg}$ contained in one open Weyl chamber.  We
write the tempered parameter as $Tc$, with $c\in C$ and $T\to\infty$.
When $T=\norm\lambda$, the vector $c$ is simply the unit direction of
$\lambda$.  The boldface symbol $\mathbf c(\lambda)$ below denotes the
Harish--Chandra $c$-function and should not be confused with the direction
$c$.

\begin{proposition}[Positive spherical peak]\label{lem:peak}
For every fixed $0<\epsilon<1$, all $T\geq T_0(C,\epsilon)$, and every
$c\in C$, there is a $K$-bi-invariant Hermitian function
$\Phi_{T,c,\epsilon}\in C_c^\infty(G)$ such that convolution by
$\Phi_{T,c,\epsilon}$ is positive on every unitary representation and
\begin{align}
 \widehat\Phi_{T,c,\epsilon}(Tc)&=1,\label{eq:method-one}\\
 \supp\Phi_{T,c,\epsilon}&\subset
 \{g:d_X(gK,K)\leq R_\epsilon\},
 \label{eq:method-support}\\
 \Phi_{T,c,\epsilon}(e)&\ll_C \epsilon^rT^q,
 \label{eq:method-id}\\
 \sup_{c\in C}\sup_{g\in E}\abs{\Phi_{T,c,\epsilon}(g)}
 &=o_{E,\epsilon}(T^q)
 \qquad(E\Subset G\setminus K).
 \label{eq:method-away}
\end{align}
Here $R_\epsilon$ is independent of $T$ and $c$.
\end{proposition}

\begin{proof}
We use the spherical transform and Paley--Wiener theorem in the normalization
recorded in \cref{s:3}.  Choose a real, even, $W$-invariant Paley--Wiener
bump $\psi_0$ with $\psi_0(0)=1$.  Concretely, take a real, even,
$W$-invariant $h\in C_c^\infty(\mathfrak a)$ with
$\int_{\mathfrak a}h=1$ and let
$\psi_0$ be its Euclidean Fourier transform.  If $\supp h$ is contained in
the radius-$R_0$ ball, then for every $N$
\begin{equation}\label{eq:psi-schwartz}
 \abs{\psi_0(\lambda+\ii\mu)}
 \leq C_N(1+\norm\lambda+\norm\mu)^{-N}
       \ee^{R_0\norm\mu}.
\end{equation}
The translate $\psi_0((\lambda-Tc)/\epsilon)$ has spectral width $\epsilon$
and exponential type $R_0/\epsilon$.  Since the spherical transform must be
$W$-invariant, define
\begin{equation}\label{eq:psi-definition}
 \psi_{T,c,\epsilon}(\lambda)
 =d_{T,c,\epsilon}^{-1}
 \sum_{w\in W}\psi_0\left(\frac{w\lambda-Tc}{\epsilon}\right),
 \qquad
 d_{T,c,\epsilon}
 =\sum_{w\in W}\psi_0\left(\frac{T(wc-c)}{\epsilon}\right).
\end{equation}
Because $c$ ranges over a compact set of regular vectors, there is
$d_C>0$ such that $\norm{wc-c}\geq d_C$ for $w\ne e$.  The Schwartz
bound on the real axis therefore gives, for every $N$,
\begin{equation}\label{eq:d-normalization}
 d_{T,c,\epsilon}=1+O_{N,C}((T/\epsilon)^{-N}).
\end{equation}
In particular, $d_{T,c,\epsilon}$ is bounded away from zero for
$T\geq T_0(C,\epsilon)$.  Reindexing the finite sum in
\eqref{eq:psi-definition} proves $W$-invariance, and substituting
$\lambda=Tc$ shows directly that
\begin{equation}\label{eq:psi-normalized}
 \psi_{T,c,\epsilon}(Tc)=1.
\end{equation}

By spherical Paley--Wiener theory there is a unique
$\beta_{T,c,\epsilon}\in C_c^\infty(K\backslash G/K)$ with
\begin{equation}\label{eq:beta-transform}
 \widehat\beta_{T,c,\epsilon}=\psi_{T,c,\epsilon},
 \qquad
 \supp\beta_{T,c,\epsilon}
 \subset\{g:d_X(gK,K)\leq R_0/\epsilon\}.
\end{equation}
Set
\begin{equation}\label{eq:kernel-construction}
 \Phi_{T,c,\epsilon}
 =\beta_{T,c,\epsilon}^**\beta_{T,c,\epsilon},
 \qquad \beta^*(g)=\overline{\beta(g^{-1})}.
\end{equation}
If $B$ is right convolution by $\beta_{T,c,\epsilon}$, then right
convolution by $\Phi_{T,c,\epsilon}$ is $B^*B$ and is therefore positive.
Moreover,
\begin{equation}\label{eq:Phi-transform}
 \widehat\Phi_{T,c,\epsilon}(\lambda)
 =\abs{\psi_{T,c,\epsilon}(\lambda)}^2\geq0
 \qquad(\lambda\in\mathfrak a^*).
\end{equation}

We derive the four displayed properties in order.

\paragraph{Derivation of \eqref{eq:method-one}.}
Equations \eqref{eq:psi-normalized} and \eqref{eq:Phi-transform} give
$\widehat\Phi_{T,c,\epsilon}(Tc)=\abs{\psi_{T,c,\epsilon}(Tc)}^2=1$.

\paragraph{Derivation of \eqref{eq:method-support}.}
The product support inclusion for convolution and the triangle inequality on
$X$ give
\[
 \supp(\beta^**\beta)
 \subset\{g:d_X(gK,K)\leq2R_0/\epsilon\}.
\]
Thus \eqref{eq:method-support} holds with $R_\epsilon=2R_0/\epsilon$.

\paragraph{Derivation of \eqref{eq:method-id}.}
From \eqref{eq:psi-definition}, Cauchy--Schwarz in the finite Weyl sum, and
\eqref{eq:d-normalization},
\begin{equation}\label{eq:psi-square-bound}
 \abs{\psi_{T,c,\epsilon}(\lambda)}^2
 \leq C_W\sum_{w\in W}
 \abs{\psi_0((w\lambda-Tc)/\epsilon)}^2.
\end{equation}
Spherical inversion at the identity, followed by the change of variables
$w\lambda=Tc+\epsilon u$, therefore gives
\begin{align}
 \Phi_{T,c,\epsilon}(e)
 &=c_X\int_{\mathfrak a^*}\abs{\psi_{T,c,\epsilon}(\lambda)}^2
   \abs{\mathbf c(\lambda)}^{-2}\dd\lambda\notag\\
 &\leq C\epsilon^r\sum_{w\in W}
 \int_{\mathfrak a^*}\abs{\psi_0(u)}^2
 \abs{\mathbf c(w^{-1}(Tc+\epsilon u))}^{-2}\dd u.
 \label{eq:peak-id-proof}
\end{align}
The Plancherel density is $W$-invariant.  On
$\norm u\leq T/(2\epsilon)$, the regular growth estimate
\eqref{eq:c-regular-growth} bounds it by
$C_CT^q(1+\norm u)^q$.  On the complementary region we use the global
bound \eqref{eq:c-global-growth}.  Taking the decay exponent in
\eqref{eq:psi-schwartz} larger than $2r+2q+2$ yields
\begin{align*}
 \epsilon^r\int_{\norm u\leq T/(2\epsilon)}
  \abs{\psi_0(u)}^2\abs{\mathbf c(Tc+\epsilon u)}^{-2}\dd u
 &\leq C_C\epsilon^rT^q,\\
 \epsilon^r\int_{\norm u>T/(2\epsilon)}
  \abs{\psi_0(u)}^2\abs{\mathbf c(Tc+\epsilon u)}^{-2}\dd u
 &\leq C_{C,\epsilon,N}T^{-N}.
\end{align*}
This proves \eqref{eq:method-id}.  The factor $\epsilon^r$ is exactly the
Jacobian of the shrinking window in the $r$ joint spectral coordinates,
while $T^q$ is the regular Plancherel density at $Tc$.

\paragraph{Derivation of \eqref{eq:method-away}.}
Insert \eqref{eq:psi-square-bound} into spherical inversion and make the same
change of variables.  Fix $L>0$.  On $\norm u\leq L$, the normalized
density $T^{-q}\abs{\mathbf c(Tc+\epsilon u)}^{-2}$ is uniformly bounded,
and the regular spherical decay proved in \cref{lem:spherical-decay} gives
\[
 \sup_{c\in C,\,g\in E,\,\norm u\leq L}
 \abs{\varphi_{Tc+\epsilon u}(g)}\longrightarrow0.
\]
The $W$-invariance of both the Plancherel density and the spherical functions
handles every Weyl peak identically.  Thus the portion $\norm u\leq L$,
after division by $T^q$, tends to zero uniformly in $c$ and $g$.  On
$L<\norm u\leq T/(2\epsilon)$, use
$\abs{\varphi_\lambda(g)}\leq1$, \eqref{eq:c-regular-growth}, and Schwartz
decay; the normalized contribution is at most
\[
 C_C\epsilon^r\int_{\norm u>L}
 \abs{\psi_0(u)}^2(1+\norm u)^q\dd u,
\]
which tends to zero as $L\to\infty$, uniformly in $T$.  The remaining region
$\norm u>T/(2\epsilon)$ is $O_{C,\epsilon,N}(T^{-N})$ by
\eqref{eq:c-global-growth}.  Letting first $T\to\infty$ and then
$L\to\infty$ proves \eqref{eq:method-away}.
\end{proof}

\subsection{The logarithmic-scale peak}

\begin{lemma}[Logarithmic spherical peak]\label{lem:log-peak}
Fix $C\Subset\mathfrak a^*_{\mathrm{reg}}$ and $\iota>0$.  There are
$\delta>0$ and $A,B>0$ such that, for
$1\leq R\leq A^{-1}\log T$, the function
$\Phi_{T,c,R^{-1}}$ constructed in \cref{lem:peak} satisfies
\begin{align}
 \Phi_{T,c,R^{-1}}(e)&\leq B T^qR^{-r},
 \label{eq:log-peak-id}\\
 \supp\Phi_{T,c,R^{-1}}&\subset
 \{g:d(gK,K)\leq BR\},\label{eq:log-peak-support}\\
 \abs{\Phi_{T,c,R^{-1}}(g)}&\leq
 B T^{q-\delta}R^{-r}\ee^{A d(gK,K)}
 \label{eq:log-peak-off}
\end{align}
whenever $c\in C$ and
$\iota\leq d(gK,K)\leq BR$.
\end{lemma}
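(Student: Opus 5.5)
We take $\epsilon=R^{-1}$ in the construction of \cref{lem:peak} and rerun its proof, now tracking how every constant depends on $\epsilon$. The hypothesis $R\leq A^{-1}\log T$ will absorb the losses that depend on $R$.

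\emph{Support.} The support bound \eqref{eq:log-peak-support} is immediate from \eqref{eq:method-support}, since $R_\epsilon=2R_0/\epsilon=2R_0R$. So any $B\geq2R_0$ works.

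\emph{Value at the identity.} For \eqref{eq:log-peak-id} we return to \eqref{eq:peak-id-proof}. First, the normalization \eqref{eq:d-normalization} must hold uniformly, which needs $T/\epsilon=TR\to\infty$; this is automatic. The factor $C_W$ in \eqref{eq:psi-square-bound} is then uniform. On the region $\norm u\leq T/(2\epsilon)=TR/2$ we use \eqref{eq:c-regular-growth} in the form $\abs{\mathbf c(Tc+\epsilon u)}^{-2}\leq C_CT^q(1+\epsilon\norm u)^q\leq C_CT^q(1+\norm u)^q$. Together with the Jacobian $\epsilon^r=R^{-r}$, this gives $C T^qR^{-r}$. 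On the complement, \eqref{eq:c-global-growth} and Schwartz decay of $\psi_0$ give a contribution
\[
 \epsilon^r\int_{\norm u>TR/2}\abs{\psi_0(u)}^2(1+T+\epsilon\norm u)^q\dd u\ll T^{-N},
\]
uniformly in $R\geq1$. This proves \eqref{eq:log-peak-id}.

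\emph{Off-diagonal bound.} This is the main step. Write, by spherical inversion,
\[
 \Phi(g)=c_X\int\abs{\psi(\lambda)}^2\varphi_\lambda(g)\abs{\mathbf c(\lambda)}^{-2}\dd\lambda .
\]
Substitute $w\lambda=Tc+\epsilon u$ and split the range into $\norm u\leq T^{1/2}$ and $\norm u>T^{1/2}$.

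On the inner region $\epsilon\norm u\leq T^{1/2}$, so \cref{lem:quantitative-decay} applies with $u$ replaced by $\epsilon u$. Its hypothesis $\iota\leq\norm H\leq A_0\log T$ holds because $d(gK,K)\leq BR\leq BA^{-1}\log T$. We obtain
\[
 \abs{\varphi}\leq C(1+\norm u)^M\ee^{A\norm H}T^{-\delta}.
\]
Integrating against $\epsilon^r\abs{\psi_0(u)}^2T^q(1+\norm u)^q$ then gives $CT^{q-\delta}R^{-r}\ee^{A d(gK,K)}$.

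On the outer region we bound $\abs{\varphi}\leq1$. The Schwartz tail is then $\ll\epsilon^rT^{q}(T^{1/2})^{-N}\ll T^{q-\delta}R^{-r}$ once $N$ is large. The part with $\norm u>TR/2$ is handled by \eqref{eq:c-global-growth} as before.

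\emph{Where care is needed.} The delicate point is checking that the constants in \cref{lem:quantitative-decay} are uniform in the stated range. That lemma relies on \cref{prop:marshall-tracked} with $R_H\leq1+A_0\log T$, which costs $\ee^{A\norm H}$. Here we simply keep this factor in the bound rather than absorbing it.

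We must also make sure that the directions $c+\epsilon u/T$ remain in a fixed regular compactum. This holds because $\epsilon\norm u/T\leq T^{-1/2}$.

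Finally, we fix $A_0$ equal to the constant in the hypothesis, namely $A_0=B/A$ with $A$ chosen first. We then enlarge $B$ to dominate all the constants above, which is possible since none of them depends on $R$ or $T$.
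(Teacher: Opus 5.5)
Your proposal is correct and follows the paper's proof essentially step for step. You take $\epsilon=R^{-1}$ and get support radius $2R_0R$ from Paley--Wiener. For the identity term you change variables by $w\lambda=Tc+u/R$, with Jacobian $R^{-r}$, and split at $\norm u=TR/2$. For the off-diagonal term you use an inner region controlled by \cref{lem:quantitative-decay} and an outer Schwartz tail; the paper splits at $\norm u=T^{1/4}$ rather than $T^{1/2}$, which makes no difference.

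Fixing $A_0=B/A$ and then enlarging $B$ looks circular but is harmless. Since $\Phi_{T,c,R^{-1}}$ vanishes for $d(gK,K)>2R_0R$, you may take $A_0=2R_0/A$ once and for all, and in any case the constant in \cref{lem:quantitative-decay} does not actually depend on $A_0$.
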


\begin{proof}
Use \eqref{eq:psi-definition} with $\epsilon=R^{-1}$.  The nonidentity terms
in its normalizing denominator are
$\psi_0(TR(wc-c))$, so \eqref{eq:psi-schwartz} gives
\begin{equation}\label{eq:log-normalization}
 d_{T,c,R^{-1}}=1+O_{N,C}((TR)^{-N})
\end{equation}
uniformly for $R\geq1$.  The exponential type of
$\psi_{T,c,R^{-1}}$ is $R_0R$; after
forming $\beta^**\beta$, Paley--Wiener therefore gives
\eqref{eq:log-peak-support} with $B=2R_0$.

For the identity term use \eqref{eq:psi-square-bound} and put
$w\lambda=Tc+u/R$.  The Jacobian is $R^{-r}$.  Split at
$\norm u=TR/2$.  On the inner region, \eqref{eq:c-regular-growth} bounds the
density by $C_CT^q(1+\norm u)^q$; on the outer region,
\eqref{eq:c-global-growth} and Schwartz decay apply.  Consequently
\begin{align*}
 \Phi_{T,c,R^{-1}}(e)
 &\leq C R^{-r}\int_{\mathfrak a^*}\abs{\psi_0(u)}^2
      \abs{\mathbf c(Tc+u/R)}^{-2}\dd u\\
 &\leq C_C T^qR^{-r}+O_N(T^{-N}),
\end{align*}
which is \eqref{eq:log-peak-id} after increasing $B$.

Let $g=k_1\exp Hk_2$ with $\iota\leq\norm H\leq BR$.  In spherical
inversion make the same change of variables and split at
$\norm u=T^{1/4}$.  On the inner region the spectral perturbation is
$u/R$, whose norm is at most $T^{1/4}$, and
\cref{lem:quantitative-decay} gives
\[
 \abs{\varphi_{Tc+u/R}(g)}
 \leq C\ee^{A\norm H}T^{-\delta}.
\]
Together with \eqref{eq:c-regular-growth} this part is at most
\begin{equation}\label{eq:log-peak-inner}
 C T^{q-\delta}R^{-r}\ee^{A\norm H}
 \int_{\mathfrak a^*}\abs{\psi_0(u)}^2(1+\norm u)^q\dd u.
\end{equation}
For $\norm u>T^{1/4}$ we use $\abs{\varphi_\lambda(g)}\leq1$ and
\eqref{eq:c-global-growth}.  Given $N$, the Schwartz bound yields
\begin{equation}\label{eq:log-peak-tail}
 R^{-r}\int_{\norm u>T^{1/4}}\abs{\psi_0(u)}^2
 (1+T+\norm u/R)^q\dd u=O_N(T^{-N}),
\end{equation}
uniformly for $1\leq R\leq A^{-1}\log T$.  The finitely many Weyl peaks
obey identical estimates.  Equations \eqref{eq:log-peak-inner} and
\eqref{eq:log-peak-tail} imply \eqref{eq:log-peak-off}, after reducing
$\delta$ if necessary to absorb the negligible tail and increasing $A,B$.
\end{proof}

\subsection{The quotient kernel and positivity}

Let $\Phi\in C_c^\infty(K\backslash G/K)$.  Its convolution operator on
$L^2(\Gamma\backslash G)^K=L^2(Y)$ has smooth kernel
\begin{equation}\label{eq:quotient-kernel}
 K_\Phi(x,y)=\sum_{\gamma\in\Gamma}
 \Phi(g_x^{-1}\gamma g_y),
\end{equation}
where $x=\Gamma g_xK$ and $y=\Gamma g_yK$.  The sum is locally finite.  If
$\Phi=\beta^**\beta$, the operator is positive.  We now prove directly, without
assuming that $Y$ is compact, the inequality that isolates one eigenfunction.
Let $B$ be right convolution by $\beta$.  Its quotient kernel is
\[
 K_\beta(x,y)=\sum_{\gamma\in\Gamma}
 \beta(g_x^{-1}\gamma g_y).
\]
For fixed $x$, the function $K_\beta(\,cdot\,,x)$ is smooth and belongs to
$L^2(Y)$: it is supported in the image of a fixed metric ball about $x$, and
proper discontinuity makes the defining sum locally finite.  Unfolding the
composition $B^*B$ gives
\begin{equation}\label{eq:kernel-column-norm}
 K_\Phi(x,x)=\int_Y\abs{K_\beta(z,x)}^2\dd z
 =\norm{K_\beta(\,\cdot\,,x)}_2^2.
\end{equation}
There is no convergence issue in the unfolding: before passing to the
quotient, both convolution factors have compact support, so for fixed
$x,z$ only finitely many deck transformations occur.

If $\phi$ is an $L^2$-normalized spherical joint eigenfunction with
parameter $\lambda$, spherical convolution acts on its one-dimensional
joint character by
$B^*\phi=\overline{\widehat\beta(\lambda)}\phi$.  Therefore
\[
 \overline{\widehat\beta(\lambda)}\phi(x)
 =\ip{\phi}{K_\beta(\,\cdot\,,x)}_{L^2(Y)}.
\]
Cauchy--Schwarz, \eqref{eq:kernel-column-norm}, and
$\widehat\Phi=\abs{\widehat\beta}^2$ give
\begin{equation}\label{eq:positivity-isolation}
 \widehat\Phi(\lambda)\abs{\phi(x)}^2\leq K_\Phi(x,x).
\end{equation}
This Hilbert-space proof is the positive pretrace inequality.  It remains
valid in the presence of continuous spectrum because it never expands the
continuous part; it only uses the given $L^2$ eigenfunction.

\subsection{Automorphization and the two scales}

For $x=\Gamma g_xK$, automorphizing the kernel gives the exact, locally
finite diagonal sum
\begin{equation}\label{eq:method-pretrace}
 K_{T,c,\epsilon}(x,x)
 =\sum_{\gamma\in\Gamma}
 \Phi_{T,c,\epsilon}(g_x^{-1}\gamma g_x).
\end{equation}
The Hilbert-space argument in \cref{eq:positivity-isolation} shows that an
$L^2$-normalized joint eigenfunction with parameter $Tc$ satisfies
\begin{equation}\label{eq:method-isolation}
 \abs{\phi(x)}^2\leq K_{T,c,\epsilon}(x,x).
\end{equation}
On a fixed compact subset of $Y$, only uniformly many terms of
\eqref{eq:method-pretrace} occur.  The identity term is
$O(\epsilon^rT^q)$ by \eqref{eq:method-id}, and every nonidentity term is
$o_\epsilon(T^q)$ by \eqref{eq:method-away}.  Hence
\[
 \abs{\phi(x)}^2\leq C\epsilon^rT^q+o_\epsilon(T^q).
\]
Letting first $T\to\infty$ and then $\epsilon\downarrow0$ is the qualitative
mechanism behind \cref{thm:main}.

For \cref{thm:log}, put $R=\kappa\log T$ and $\epsilon=R^{-1}$.  The same
change of variables as in \eqref{eq:peak-id-proof} gives the identity
contribution $O(T^qR^{-r})$.  The quantitative spherical estimate of
\cref{prop:marshall-tracked}, inserted into the preceding construction,
gives \cref{lem:log-peak} and hence
\[
 \sum_{\gamma\ne e}
 \abs{\Phi_{T,c,R^{-1}}(g_x^{-1}\gamma g_x)}
 \ll T^{q-\delta}\ee^{CR}R^{-r}.
\]
When the injectivity radius is bounded below, orbit packing is uniform in
$x$.  Choosing $\kappa$ sufficiently small makes the last display lower
order than $T^qR^{-r}$.  Thus the logarithmic saving comes from shrinking
all $r$ joint spectral coordinates, while regular spherical oscillation pays
for the resulting logarithmic propagation radius.

\section{Proof of the theorems}\label{s:4}

\subsection{Compact sets and uniformly thick quotients}

We first prove the compact-local assertion in \cref{thm:main}.

\begin{proposition}[Thick-part estimate]\label{prop:thick}
Under the spectral hypotheses of \cref{thm:main}, for every $K_0\Subset Y$,
\[
 \norm{\phi_j}_{L^\infty(K_0)}=o_{K_0}(T_j^{q/2}).
\]
\end{proposition}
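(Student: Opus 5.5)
We combine the positive pretrace inequality \eqref{eq:method-isolation} with the peak of \cref{lem:peak} and the orbit bounds of \cref{lem:orbit}. The order of limits is: fix $\epsilon$ and let $j\to\infty$, then let $\epsilon\downarrow0$.

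Fix $0<\epsilon<1$ and write $\lambda_j=T_jc_j$ with $c_j$ ranging in a fixed compact regular set $C$. This is allowed by uniform regularity after replacing $T_j$ by $\norm{\lambda_j}$, which changes nothing asymptotically. Let $\Phi_j=\Phi_{T_j,c_j,\epsilon}$ be the kernel of \cref{lem:peak}, defined once $T_j\geq T_0(C,\epsilon)$. By \eqref{eq:method-one}, $\widehat\Phi_j(\lambda_j)=1$. Then \eqref{eq:positivity-isolation} gives, for every $x=\Gamma g_xK\in K_0$,
\[
 \abs{\phi_j(x)}^2\leq\sum_{\gamma\in\Gamma}\Phi_j(g_x^{-1}\gamma g_x).
\]
By \eqref{eq:method-support}, only those $\gamma$ with $d_X(g_xK,\gamma g_xK)\leq R_\epsilon$ contribute. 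Apply \cref{lem:orbit} with $R=R_\epsilon$; this yields $\iota>0$ and $N=N(K_0,\epsilon)$, both uniform in $x\in K_0$.

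We can choose the lifts $g_x$ to lie in a fixed compact set $\tilde K_0\subset G$ projecting onto $K_0$. Every contributing nonidentity element then satisfies $g_x^{-1}\gamma g_x\in E$, where
\[
 E=\{h\in G:\ h\in\tilde K_0^{-1}\Gamma\tilde K_0,\ \iota\leq d_X(hK,K)\leq R_\epsilon\}.
\]
A simpler choice is the set $\{h:\iota\leq d_X(hK,K)\leq R_\epsilon\}$, which is compact and disjoint from $K$ by \eqref{eq:orbit-separation}; it suffices, since \eqref{eq:method-away} only needs $E\Subset G\setminus K$. The identity term is $\ll_C\epsilon^rT_j^q$ by \eqref{eq:method-id}. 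There are at most $N$ other terms, each bounded by $\sup_{c\in C}\sup_{E}\abs{\Phi_{T_j,c,\epsilon}}=o_{\epsilon}(T_j^q)$ by \eqref{eq:method-away}. Hence
\[
 \sup_{x\in K_0}\abs{\phi_j(x)}^2\leq C_1\epsilon^rT_j^q+N(K_0,\epsilon)\,o_\epsilon(T_j^q),
\]
where $C_1$ depends only on $C$ and not on $\epsilon$.

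Dividing by $T_j^q$ and taking $\limsup_{j\to\infty}$ gives $\limsup_j T_j^{-q}\norm{\phi_j}_{L^\infty(K_0)}^2\leq C_1\epsilon^r$ for every $\epsilon$. Letting $\epsilon\to0$ yields the claim.

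The main point requiring care is uniformity. The constant in \eqref{eq:method-id} must be independent of $\epsilon$, which it is by the proof of \cref{lem:peak}. The $o$-term must be uniform both in the direction $c_j$ and in $x\in K_0$, which the supremum over $c\in C$ and $g\in E$ in \eqref{eq:method-away} provides. No rate is needed at this stage.
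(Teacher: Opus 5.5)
Your proposal is correct and follows the paper's proof essentially line by line. Both use positivity with the fixed-$\epsilon$ peak, use \cref{lem:orbit} to place the uniformly many nonidentity terms in the compact set $\{h:\iota\le d_X(hK,K)\le R_\epsilon\}\Subset G\setminus K$, apply \eqref{eq:method-id} and \eqref{eq:method-away}, and then take $\limsup_j$ followed by $\epsilon\downarrow0$; you also correctly flag the $\epsilon$-independence of the constant in \eqref{eq:method-id} and the uniformity of the $o$-term in $c\in C$ and $x\in K_0$, which the paper uses implicitly.
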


\begin{proof}
Let $C\Subset\mathfrak a^*_{\mathrm{reg}}$ contain all directions
$c_j=\lambda_j/\norm{\lambda_j}$.  Replacing $T_j$ by $\norm{\lambda_j}$
changes nothing.  Fix $\epsilon>0$ and apply \cref{lem:peak} with
$T=T_j$, $c=c_j$.  By \eqref{eq:positivity-isolation},
\begin{equation}\label{eq:thick-pretrace}
 \abs{\phi_j(x)}^2
 \leq \Phi_{T_j,c_j,\epsilon}(e)
 +\sum_{\substack{\gamma\ne e\\
 d_X(g_xK,\gamma g_xK)\leq R_\epsilon}}
 \abs{\Phi_{T_j,c_j,\epsilon}(g_x^{-1}\gamma g_x)}.
\end{equation}
For $x\in K_0$, \cref{lem:orbit} puts every argument in the second line in
a compact subset of $G\setminus K$ and bounds the number of terms uniformly.
The identity term is at most $C\epsilon^rT_j^q$ by
\eqref{eq:method-id}; the remaining sum is $o_{K_0,\epsilon}(T_j^q)$ by
\eqref{eq:method-away}.  Therefore
\[
 \limsup_{j\to\infty}T_j^{-q}
 \norm{\phi_j}_{L^\infty(K_0)}^2\leq C\epsilon^r.
\]
Let $\epsilon\downarrow0$.  This proves the local assertion.
\end{proof}

Thus the local assertion of \cref{thm:main} holds without any volume
hypothesis.

\subsection{The finite-volume expanding-cusp range}

We use the coordinates and the precisely invariant cusp neighborhood from
the preceding subsection.  We first make the rank-one kernel estimate
explicit.

\begin{lemma}[Rank-one peak kernel]\label{lem:rank-one-kernel}
Fix $0<\epsilon<1$.  For the positive peak of \cref{lem:peak} on
$\mathbb H^{q+1}$,
\begin{equation}\label{eq:rank-one-kernel}
 \abs{\Phi_{T,c,\epsilon}(g)}
 \leq C_{\epsilon}T^q(1+T d_X(gK,K))^{-q/2}
\end{equation}
whenever $g$ belongs to its support.
\end{lemma}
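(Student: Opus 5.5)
Since $\Phi_{T,c,\epsilon}$ is supported in $d_X(gK,K)\leq R_\epsilon=2R_0/\epsilon$, the plan is to bound it through spherical inversion and the rank-one case of \cref{prop:marshall-tracked}, with $R=R_\epsilon$ fixed. In rank one there is a single positive indivisible root $\alpha$, possibly accompanied by $2\alpha$ with $m_\alpha+m_{2\alpha}=q$. For $g=k_1\exp H k_2$ with $\norm H\leq R_\epsilon$, the product in \eqref{eq:marshall-tracked} is
\[
 (1+t\abs{\alpha(H)})^{-m_\alpha/2}(1+2t\abs{\alpha(H)})^{-m_{2\alpha}/2}
 \ll(1+t\norm H)^{-q/2},
\]
because $\abs{\alpha(H)}\asymp\norm H$ in rank one. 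The constant $D\ee^{AR_\epsilon}$ depends only on $\epsilon$. So for every direction $c'$ in a fixed compact regular set $C_1$ and every $t\geq1$ we get $\abs{\varphi_{tc'}(g)}\leq C_\epsilon(1+t\,d_X(gK,K))^{-q/2}$ on the support. For small $t$ we use instead the trivial bound \eqref{eq:spherical-unit-bound}.

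Next, insert \eqref{eq:psi-square-bound} into spherical inversion and change variables $w\lambda=Tc+\epsilon u$, as in \eqref{eq:peak-id-proof}. This gives
\[
 \abs{\Phi_{T,c,\epsilon}(g)}\leq C\epsilon\sum_{w}\int\abs{\psi_0(u)}^2\abs{\mathbf c(Tc+\epsilon u)}^{-2}\abs{\varphi_{Tc+\epsilon u}(g)}\dd u,
\]
where the $W$-invariance of $\varphi$ and of the Plancherel density handles each Weyl peak. Split at $\norm u=T/(2\epsilon)$.

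On the inner region, write $Tc+\epsilon u=t c'$ with $t=\norm{Tc+\epsilon u}\in[T/2,3T/2]$ and $c'$ a unit regular direction, which in rank one is automatic. The previous paragraph then gives $\abs{\varphi}\ll_\epsilon(1+T d_X(gK,K))^{-q/2}$, with the factor $t\asymp T$ absorbed into the constant. By \eqref{eq:c-regular-growth} the density is at most $C T^q(1+\norm u)^q$, and the Schwartz decay of $\psi_0$ makes the $u$-integral converge. This yields $C_\epsilon T^q(1+T d_X)^{-q/2}$.

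On the outer region, use $\abs{\varphi}\leq1$, \eqref{eq:c-global-growth}, and \eqref{eq:psi-schwartz} with large $N$; this gives $O_{\epsilon,N}(T^{-N})$. Since $d_X\leq R_\epsilon$ on the support, $(1+T d_X)^{-q/2}\geq c_\epsilon T^{-q/2}$, so this tail is $\ll_\epsilon T^q(1+Td_X)^{-q/2}$ and can be absorbed.

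The main point to verify is that the decay in \eqref{eq:marshall-tracked} applies uniformly across the whole inner spectral window, where the parameter $t$ ranges over $[T/2,3T/2]$ rather than being fixed at $T$. Proposition~\ref{prop:marshall-tracked} is uniform in $t\geq1$ and in $c'\in C_1$, and in rank one $C_1$ can be taken to be the unit sphere in the positive chamber, i.e.\ a single point. So this step should be routine, and the lemma follows with no $T$-dependence in $C_\epsilon$.
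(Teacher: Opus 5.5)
Your proposal is correct and follows essentially the same route as the paper. The paper also applies \cref{prop:marshall-tracked} on the fixed ball of radius $R_\epsilon$ to get $(1+s\norm H)^{-q/2}$ decay for $s\asymp T$, and substitutes $s=T+\epsilon u$ in spherical inversion. It then uses \eqref{eq:c-regular-growth} and Schwartz decay on $\abs u\leq T/(2\epsilon)$, and $\abs{\varphi_s}\leq1$ with \eqref{eq:c-global-growth} on the tail, which is absorbed because $d_X(gK,K)\leq R_\epsilon$ on the support. Your treatment of a possible $2\alpha$ root is a harmless generalization; on $\mathbb H^{q+1}$ one simply has $m_\alpha=q$.
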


\begin{proof}
In real rank one the sole indivisible positive root has total multiplicity
$q$.  Marshall's estimate \eqref{eq:marshall-tracked}, applied on the fixed
ball of radius $R_\epsilon$, becomes
\begin{equation}\label{eq:rank-one-spherical}
 \abs{\varphi_s(\exp H)}
 \leq C_\epsilon(1+s\norm H)^{-q/2}
 \quad(s\asymp T,\ \norm H\leq R_\epsilon).
\end{equation}
For the peak at $T$, put $s=T+\epsilon u$ in spherical inversion.  On
$\abs u\leq T/(2\epsilon)$ we have $s\asymp T$, so
\eqref{eq:rank-one-spherical}, \eqref{eq:c-regular-growth}, and
Schwartz decay give
\begin{align*}
 &\epsilon\int_{\abs u\leq T/(2\epsilon)}
 \abs{\psi_0(u)}^2\abs{\varphi_{T+\epsilon u}(\exp H)}
 \abs{\mathbf c(T+\epsilon u)}^{-2}\dd u\\
 &\qquad\leq C_\epsilon T^q(1+T\norm H)^{-q/2}
 \int_\mathbb R\abs{\psi_0(u)}^2(1+\abs u)^q\dd u.
\end{align*}
On $\abs u>T/(2\epsilon)$ use $\abs{\varphi_s}\leq1$, the polynomial
density bound \eqref{eq:c-global-growth}, and arbitrarily rapid decay of
$\psi_0$.  That tail is $O_{\epsilon,N}(T^{-N})$, which is absorbed by the
right side of \eqref{eq:rank-one-kernel} because
$\norm H\leq R_\epsilon$.  The Weyl peak at $-T$ has the same estimate.
\end{proof}

We sum \eqref{eq:rank-one-kernel} over nonzero parabolic translations.  Only
vectors with $\abs v\leq C_\epsilon y$ occur, by
\eqref{eq:parabolic-exact-distance} and the support condition.  Write
$\Phi_{T,c,\epsilon}(r)$ for the common value of this radial kernel on
the Cartan double coset at distance $r$.  The number
of lattice vectors in a Euclidean ball of radius $L$ is $O_\Lambda(1+L^q)$.
Vectors with $0<\abs v\leq y/T$ therefore contribute
$O_\epsilon(y^q)$: when $y/T$ is below the shortest lattice length the sum
is empty, and otherwise their number is $O((y/T)^q)$ and each summand is
$O(T^q)$.  For $y/T<\abs v\leq C_\epsilon y$, use
\eqref{eq:cusp-distance-prelim} and decompose into unit Euclidean shells:
\begin{align}
 &\sum_{y/T<\abs v\leq C_\epsilon y}
 T^q(1+T\abs v/y)^{-q/2}\notag\\
 &\qquad\leq C_\epsilon T^{q/2}y^{q/2}
 \sum_{0<\abs v\leq C_\epsilon y}\abs v^{-q/2}
 \leq C_{\epsilon,\Lambda}T^{q/2}y^q.
 \label{eq:parabolic-shell-detail}
\end{align}
The last inequality follows from
$\sum_{k\leq L}k^{q-1-q/2}=O(1+L^{q/2})$.  We have proved
\begin{equation}\label{eq:parabolic-sum}
 \sum_{0\ne v\in\Lambda}
 \abs{\Phi_{T,c,\epsilon}(d((z,y),(z+v,y)))}
 \leq C_\epsilon(y^q+T^{q/2}y^q)
 \leq C_\epsilon T^{q/2}y^q.
\end{equation}

By \eqref{eq:nonparabolic-distance-prelim}, no element outside the cusp
stabilizer can occur in the kernel sum when
$2\log(y/y_0)>R_\epsilon$.  The complementary bounded-height portion of
the cusp, together with the compact core, is a fixed compact subset of $Y$
and is already covered by \cref{prop:thick}.

Fix $\beta<1/2$.  Combining \eqref{eq:method-id},
\eqref{eq:parabolic-sum}, and positivity shows that, uniformly for
$y\leq T^\beta$,
\begin{equation}\label{eq:lower-cusp}
 \abs{\phi(z,y)}^2
 \leq C\epsilon T^q+C_\epsilon T^{q(1/2+\beta)}+o_\epsilon(T^q)
 =C\epsilon T^q+o_\epsilon(T^q).
\end{equation}
Here $q(1/2+\beta)<q$, which is the strict power saving needed for the
parabolic sum.  Divide by $T^q$, first let $T\to\infty$ with $\epsilon$
fixed, and then let $\epsilon\downarrow0$.  Together with the compact-core
estimate in \cref{prop:thick}, this proves
\eqref{eq:truncated-cusp-main} in every one of the finitely many cusps.

\subsection{Proof of the logarithmic improvement}

Assume \eqref{eq:thickness}, and let $\iota>0$ be a uniform lower bound for
the displacement of every nonidentity deck transformation.  Let $\delta,A,B$
be given by \cref{lem:log-peak}, and let $h_X$ be as in
\eqref{eq:orbit-exponential}.  Choose $\kappa>0$ so small that
$\kappa<A^{-1}$ and
\begin{equation}\label{eq:kappa-choice}
 \kappa B(A+h_X)<\frac{\delta}{2},
\end{equation}
and put $R=\kappa\log T$.  Positivity, \cref{lem:log-peak}, and orbit packing
give, uniformly for $x\in Y$,
\begin{align}
 \abs{\phi(x)}^2
 &\leq \Phi_{T,c,R^{-1}}(e)
 +\sum_{\substack{\gamma\ne e\\d(x,\gamma x)\leq BR}}
   \abs{\Phi_{T,c,R^{-1}}(g_x^{-1}\gamma g_x)}\notag\\
 &\ll \frac{T^q}{R^r}
 +\ee^{h_XBR}\frac{T^{q-\delta}}{R^r}\ee^{ABR}
 \ll \frac{T^q}{R^r}.
 \label{eq:log-pretrace-proof}
\end{align}
Indeed, the second term divided by $T^qR^{-r}$ is
$T^{-\delta+\kappa B(A+h_X)}\leq T^{-\delta/2}$ by
\eqref{eq:kappa-choice}.  Taking square roots and using
$R\asymp\log T$ proves \eqref{eq:log-main}.

Every compact quotient satisfies \eqref{eq:thickness}.  A torsion-free
convex cocompact real hyperbolic group is finitely generated and has compact
convex core.  Its translation-length spectrum is discrete away from zero,
so $\ell_0=\inf_{\gamma\ne e}\ell(\gamma)>0$.  Since
$d_X(x,\gamma x)\geq\ell(\gamma)$ for every $x$, one has
$\inj_Y(x)\geq\ell_0/2$ globally.  This proves all assertions of
\cref{thm:log}.

\subsection{Why the proof stops before the turning height}

For completeness, we verify the Airy obstruction advertised in the
introduction.  It also explains why no hidden global finite-volume claim is
contained in \cref{thm:main}.  Define
\[
 I_T(a)=\int_a^\infty\abs{K_{\ii T}(u)}^2\frac{\dd u}{u}.
\]
The uniform Liouville--Green and Airy expansions for imaginary-order Bessel
functions imply, for fixed $a_*>0$ and $0<c_0<c_1<1$,
\begin{align}
 \ee^{\pi T}I_T(a)&\asymp
 \frac{1+\log(T/a)}{T}
 &&(a_*\leq a\leq c_0T),\label{eq:true-bessel-mass}\\
 \ee^{\pi T}\abs{K_{\ii T}(T+sT^{1/3})}^2
 &\asymp T^{-2/3}\abs{\operatorname{Ai}(2^{1/3}s)}^2
 &&(\abs s\leq s_0),
 \label{eq:true-bessel-airy}
\end{align}
where $s_0>0$ is fixed and sufficiently small that the displayed Airy factor
has no zero.  These are direct consequences of Theorems~2.1 and~4.1 of
\cite{Dunster2025}; \eqref{eq:true-bessel-mass} also follows by squaring the
oscillatory expansion on $a\leq u\leq c_1T$.  In that range the phase
derivative with respect to $\log u$ has magnitude
$\sqrt{T^2-u^2}\asymp T$.  Nonstationary phase controls the cosine cross
term, while the nonoscillatory term is $\asymp T^{-1}\dd u/u$; integration
gives
\eqref{eq:true-bessel-mass}.  Formula
\eqref{eq:true-bessel-airy} is the turning-point scaling; at $s=0$ the Airy
factor is nonzero.

Now take a lattice vector $m$ and a height
$y=T/(2\pi\abs m)$, so that its Bessel argument is exactly $T$.  If
$2\pi\abs m y_0\leq c_0T$, equations
\eqref{eq:true-bessel-mass}--\eqref{eq:true-bessel-airy} give
\begin{equation}\label{eq:single-mode-ratio}
 \frac{\abs{K_{\ii T}(2\pi\abs m y)}^2}
 {I_T(2\pi\abs m y_0)}
 \asymp\frac{T^{1/3}}
 {1+\log(T/(2\pi\abs m y_0))}.
\end{equation}
The right side is larger by $T^{1/3}$, up to a logarithm, than the
$O(1)$ ratio predicted by replacing the transition value with its
oscillatory average.  If the shell
$\abs{2\pi\abs m y-T}\leq T^{1/3}$ contains only finitely many lattice
points, spatial lattice summation supplies no averaging that can cancel
\eqref{eq:single-mode-ratio}.  Weighted Cauchy--Schwarz and the Parseval
identity \eqref{eq:cusp-parseval-prelim} therefore do not imply a global
strict Sarnak bound.  Additional arithmetic or geometric information on
the individual Fourier coefficients would be required.  This is the precise
boundary of the present method.

\paragraph{AI use declaration.}
The author used OpenAI's ChatGPT for research brainstorming, literature
discovery, checking intermediate arguments and calculations, and editorial
revision.  ChatGPT was not treated as an author or as an independent source
of mathematical authority.  The author assumes full responsibility for all
statements, proofs, citations, and errors.

\paragraph{Acknowledgements}
We thank Simon Marshall and Peter Sarnak for comments on an earlier draft. In particular, we thank Simon for pointing out the delicacy in making his bound effective.

The author is supported by a grant from the Simons Foundation [SFI-MPS-TSM-00013410]

\begingroup
\fontsize{8.5}{9.5}\selectfont

\endgroup

\end{document}